\numberwithin{equation}{section}
\newtheorem{Theorem}{Theorem}[section]
\newtheorem{theoremA}{Theorem}
\newtheorem*{Lemma*}{Lemma}
\newtheorem{Corollary}[Theorem]{Corollary}
\newtheorem{Lemma}[Theorem]{Lemma}
\newtheorem{Proposition}[Theorem]{Proposition}
\theoremstyle{definition}
\newtheorem{Definition}[Theorem]{Definition}
\newtheorem{Example}[Theorem]{Example}
\newtheorem{Remark}[Theorem]{Remark} }
\newcommand{\cB}{\ensuremath{\mathcal{B}}}
\newcommand{\cG}{\ensuremath{\mathcal{G}}}
\newcommand{\cH}{\ensuremath{\mathcal{H}}}
\newcommand{\cV}{\ensuremath{\mathcal{V}}}
\newcommand{\R}{\ensuremath{\mathbb{R}}}
\newcommand{\N}{\ensuremath{\mathbb{N}}}
\newcommand{\Bis}{\ensuremath{\operatorname{Bis}}}
\DeclareMathOperator{\Diff}{Diff}
\DeclareMathOperator{\Fl}{Fl}
\DeclareMathOperator{\id}{id}
\begin{document}

%\allowdisplaybreaks

\newcommand{\arXivNumber}{2411.00587}

\renewcommand{\PaperNumber}{037}

\FirstPageHeading

\ShortArticleName{The Stacey--Roberts Lemma for Banach Manifolds}

\ArticleName{The Stacey--Roberts Lemma for Banach Manifolds}

\Author{Peter KRISTEL~$^{\rm a}$ and Alexander SCHMEDING~$^{\rm b}$}

\AuthorNameForHeading{P.~Kristel and A.~Schmeding}

\Address{$^{\rm a)}$~Cyberagentur, Gro{\ss}e Steinstra{\ss}e 19, 06108 Halle (Saale), Germany}
\EmailD{\href{mailto:pkristel@gmail.com}{pkristel@gmail.com}}
\URLaddressD{\url{https://peterkristel.com/}}

\Address{$^{\rm b)}$~NTNU Trondheim, Alfred Getz' vei 1, Trondheim, Norway}
\EmailD{\href{mailto:alexander.schmeding@ntnu.no}{alexander.schmeding@ntnu.no}}
\URLaddressD{\url{https://www.ntnu.no/ansatte/alexander.schmeding}}

\ArticleDates{Received November 27, 2024, in final form May 07, 2025; Published online May 18, 2025}

\Abstract{The Stacey--Roberts lemma states that a surjective submersion between finite-dimensional manifolds gives rise to a submersion on infinite-dimensional manifolds of smooth mappings by pushforward. This result is foundational for many constructions in infinite-dimensional differential geometry such as the construction of Lie groupoids of smooth mappings. We generalise the Stacey--Roberts lemma to Banach manifolds which admit smooth partitions of unity.
The new approach also remedies an error in the original proof of the result for the purely finite-dimensional setting.}

\Keywords{manifold of mappings; submersion; connection; Stacey--Roberts lemma; spray; anchored Banach bundle; Banach manifold}

\Classification{58D15; 58B20; 58B10; 53C05}

\section{Introduction}

Manifolds of mappings are among the most important examples of infinite-dimensional manifolds with far ranging applications from geometric hydrodynamics, to shape analysis and infinite-dimensional Lie theory.
A fundamental property of these manifolds is that a smooth mapping between manifolds induces a smooth mapping between the associated manifolds of mappings, the push-forward.
To illustrate this, consider smooth (finite-dimensional) manifolds $X$, $M$, $N$ and a smooth map $\varphi \colon M \rightarrow N$.
Denote by $C^k(X,M)$ the maps of class $C^k$, $k\in \N \cup \{\infty\}$.
This set can be turned into an infinite-dimensional manifold with the property that the pushforward
\begin{displaymath}
\varphi_\ast \colon \ C^k (X,M) \rightarrow C^k (X,N), \qquad \varphi_\ast (f)= \varphi \circ f
\end{displaymath}
is smooth.
It is now of interest to understand which properties of $\varphi \colon M \rightarrow N$ lift to properties of the pushforward $\varphi_\ast$.
For example, if $\varphi$ is a submersion, an embedding or a proper map, then~\mbox{\cite[Theorem E]{AaGaS20}} establishes sufficient conditions on the manifolds and the differentiability index $k$ for $\varphi_\ast$ to be of the same type. In particular, the following result, originally introduced by \cite[Corollary 5.2]{stacey2013smooth} for generalised calculi on infinite-dimensional manifolds, is known in the literature (see, e.g., \cite[Lemma 2.24]{Sch23}).

\begin{Lemma*}[Stacey--Roberts lemma] Let $\varphi \colon M \rightarrow N$ be a smooth surjective submersion between finite-dimensional manifolds and $X$ a finite-dimensional smooth manifold. Then the pushforward
\begin{equation*}
 \varphi_\ast \colon\ C^\infty (X,M) \rightarrow C^\infty (X,N)
\end{equation*}
is a submersion.
\end{Lemma*}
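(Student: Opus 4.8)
The plan is to reduce the (nonlinear) submersion property of $\varphi_\ast$ to a statement about a single continuous linear map, by building charts on $C^\infty(X,M)$ and $C^\infty(X,N)$ in which $\varphi_\ast$ becomes exactly the pushforward on sections. Recall that the manifold structure on $C^\infty(X,M)$ is obtained from a local addition $\Sigma_M \colon \Omega_M \to M$ (an open neighbourhood $\Omega_M \subseteq TM$ of the zero section, restricting to the identity on the zero section and with $(\pi_M,\Sigma_M)$ a diffeomorphism onto a neighbourhood of the diagonal in $M \times M$). The induced chart at $f_0$ identifies maps $f$ near $f_0$ with the section $x \mapsto (\Sigma_M|_{T_{f_0(x)}M})^{-1}(f(x))$ of the pullback bundle $f_0^\ast TM$, so that the modelling space is the Fr\'echet space $\Gamma(f_0^\ast TM)$ and $T_{f_0}C^\infty(X,M) = \Gamma(f_0^\ast TM)$.

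The central idea is to choose the local additions compatibly with the submersion: I would construct local additions $\Sigma_M$ on $M$ and $\Sigma_N$ on $N$ that are $\varphi$-related, in the sense that $\varphi \circ \Sigma_M = \Sigma_N \circ T\varphi$ on a suitable neighbourhood of the zero section. Granting this, a direct computation shows that in the charts at $f_0$ and at $g_0 := \varphi \circ f_0$ the pushforward $\varphi_\ast$ is represented by the map $\Gamma(f_0^\ast TM) \to \Gamma(g_0^\ast TN)$, $s \mapsto T\varphi \circ s$, i.e.\ the fibrewise application of the bundle morphism $T\varphi$. This is a continuous linear map between Fr\'echet spaces, so it remains only to see that it is a submersion, equivalently that it is surjective with topologically complemented kernel (surjectivity plus a complement then yields a linear projection normal form via the open mapping theorem).

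For this linear statement I would use that $\varphi$ is a submersion fibrewise: $T\varphi \colon TM \to TN$ restricts to a surjective vector bundle morphism covering $\varphi$, with kernel the vertical bundle $VM = \ker T\varphi$. Pulling back along $f_0$ gives a short exact sequence of finite-dimensional vector bundles over $X$,
\[ 0 \to f_0^\ast VM \to f_0^\ast TM \xrightarrow{T\varphi} g_0^\ast TN \to 0, \]
which splits smoothly because $X$ is finite-dimensional, hence paracompact: a horizontal lift $L$ with $T\varphi \circ L = \id$ exists, for instance from a Riemannian metric on $M$. Post-composition $s \mapsto L \circ s$ is then a continuous linear right inverse to $s \mapsto T\varphi \circ s$, and $\Gamma(f_0^\ast TM) = \Gamma(f_0^\ast VM) \oplus \operatorname{im}(L_\ast)$ as topological vector spaces. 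Thus the chart representation is surjective with complemented kernel, and the claim follows once compatible local additions are in hand.

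The main obstacle is the construction of the $\varphi$-related local additions, and this is precisely the step that must be handled with care. I would obtain them from sprays: fix a spray $S_N$ on $N$ with exponential $\Sigma_N = \exp^N$, choose an Ehresmann connection for $\varphi$ (a horizontal distribution $HM$ with $TM = HM \oplus VM$), and assemble a spray $S_M$ on $M$ that is $\varphi$-related to $S_N$ out of the horizontal lift of $S_N$ together with a vertical spray on the fibres. Since $\varphi$-related sprays send geodesics to geodesics, their exponential maps satisfy $\varphi \circ \exp^M = \exp^N \circ T\varphi$ where defined, giving the required compatibility. Both the connection and the vertical spray are produced by gluing local models with a smooth partition of unity, which is why the hypothesis that the manifolds admit smooth partitions of unity is essential in the Banach setting (and automatic here). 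The delicate points — where I expect the original argument to break down — are to verify that the glued object is genuinely a spray (second-order and quadratically homogeneous) that is $\varphi$-related simultaneously, and that the resulting $\Sigma_M$ is a bona fide local addition on a \emph{full} neighbourhood of the zero section rather than merely fibrewise; these I would address through the anchored Banach bundle and spray formalism.
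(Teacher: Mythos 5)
Your overall architecture coincides with the paper's: reduce everything to constructing local additions $\Sigma_M$, $\Sigma_N$ with $\varphi\circ\Sigma_M=\Sigma_N\circ T\varphi$ near the zero section, observe that in the resulting canonical charts $\varphi_\ast$ is represented by the continuous linear map $s\mapsto T\varphi\circ s$, and produce a continuous linear right inverse by pushing forward a horizontal lift coming from an Ehresmann connection. This is precisely the paper's plan around diagram~\eqref{diag:SR-Lemma} and its proof of Theorem~\ref{TheoremA}, including the replacement of Riemannian metrics by sprays; your localisation of the difficulty is also accurate. (Two minor inaccuracies that do not affect the structure: for non-compact $X$ the modelling space is $\Gamma_c(f_0^\ast TM)$ with its inductive limit topology rather than a Fr\'echet space of all sections, and one must shrink $\Omega_M$ so that $T\varphi(\Omega_M)\subseteq\Omega_N$ before the chart computation makes sense.)

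The genuine gap is the step you yourself flag and defer: assembling a spray $S_M$ on $TM$ that is $T\varphi$-related to $S_N$ ``out of the horizontal lift of $S_N$ together with a vertical spray on the fibres.'' Taken literally this does not typecheck. A spray on the anchored bundle $\cH$ assigns to $h\in\cH$ a vector $S_\cH(h)\in T_h\cH$, and a vertical spray assigns to $v\in\cV$ a vector $S_\cV(v)\in T_v\cV$; what is needed is a vector in $T_{v+h}(TM)$, and no pointwise operation produces it from these two pieces: for instance the tangent $T(+)$ of fibrewise addition only accepts pairs $(A,B)\in T_v(TM)\times T_h(TM)$ with $T\pi_M(A)=T\pi_M(B)$, whereas $T\pi_M(S_\cV(v))=v\neq h=T\pi_M(S_\cH(h))$. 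Coupling the two pieces needs extra transport data, and this is exactly where the published proof in \cite{AaS19} broke down (a Riemannian submersion metric satisfies the required compatibility only on horizontal vectors). The paper's resolution is to give up on $\Sigma_M$ being a spray exponential on $TM$ altogether: it sets $\Sigma_M(v,h)=\exp_{S_\cV}\bigl(P^{c_h}_{0,1}(v)\bigr)$, i.e., parallel transport of the vertical component (for an auxiliary linear connection on $\cV$) along the horizontal-spray geodesic $c_h(t)=\exp_{S_\cH}(th)$, followed by the vertical-spray exponential; commutativity of \eqref{diag:SR-Lemma} and the local-addition property are then checked by hand in Lemmas~\ref{lem:con_loc_add_subm}, \ref{lem:spray_lifting} and Proposition~\ref{prop:custom_sigma}. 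Your related-spray idea could instead be salvaged by abandoning the horizontal/vertical assembly: in a submersion chart, where $\varphi$ becomes ${\rm pr}_1\colon U\times V\to U$, the spray whose quadratic part is that of $S_N$ in the $U$-coordinates and zero in the $V$-coordinates is $T\varphi$-related to $S_N$, and since relatedness is a fibrewise affine condition and sprays form a convex set, such local sprays can be glued by a partition of unity on $M$. Either way, the substance of the proof lies exactly in the step your proposal leaves open.
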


Recall that manifolds of smooth mappings are infinite-dimensional manifolds modelled on non-Banach locally convex spaces.
In the setting of manifolds of smooth maps, smoothness of the pushforward has to be understood in the sense of Bastiani calculus, while submersions are defined à la Hamilton \cite[Defenition~4.4.8]{Ham82} (see also Appendix \ref{App:Bast}) as maps admitting submersion charts.
This distinguishes the case of spaces of smooth maps from the~case of $C^k$-mappings for $k<\infty$ which are Banach manifolds. We note that for these Banach manifolds an easier proof of a statement analogous to the Stacey--Roberts lemma can be given using section, see \cite{AaGaS20} for a~detailed proof in the case $k<\infty$.

Manifolds of (smooth) mappings occur naturally in areas such as shape analysis or the \mbox{analysis} of PDE via the Ebin--Marsden approach (also cf.\ \cite{Sch23}). In these settings, the Stacey--Roberts lemma becomes a versatile tool for infinite-dimensional differential geometry. To illustrate this, the following list contains an overview of known use cases for the Stacey--Roberts lemma. Namely, the Stacey--Roberts lemma is an important tool in the construction of
\begin{itemize}\itemsep=0pt
\item (infinite-dimensional) Lie groupoids \cite{AaGaS20},
\item Lie rackoids integrating Courant algebroids \cite{LGaW20},
\item bisection groups and infinite-dimensional symmetry groups related to Lie groupoids~\cite{AaS19,AaS19b,Sch20},
\item smooth loop stacks and Hom stacks of orbifolds \cite{RaV18,RaV18b},
\item smooth mapping stacks in derived differential geometry \cite{ste24},
\item the square root velocity transform for shape spaces with values in homogeneous spaces in shape analysis \cite{CaEaS18}.
\end{itemize}
In the setting of Bastiani calculus, the proof of the Stacey--Roberts lemma appeared first in~\cite{AaS19} but as Steffens noted in private conversation with the second author the published proof contains a~gap (a solution to this problem was then proposed in~\cite{ste24}).
In the present article, we remedy this problem and provide a complete and streamlined proof of the Stacey--Roberts lemma. Moreover, we shall establish the following generalised version of the statement.

\begin{theoremA}[Stacey--Roberts lemma for Banach manifolds] \label{TheoremA}
Let $X$ be a $\sigma$-compact manifold and $M$, $N$ be $C^\infty$-paracompact manifolds modeled on Banach spaces.
If $p \colon M \rightarrow N$ is a smooth surjective submersion, then the pushforward
\begin{displaymath} p_\ast \colon\ C^\infty (X,M) \rightarrow C^\infty (X,N),\qquad p_\ast (f)=p\circ f\end{displaymath}
 is a submersion.
\end{theoremA}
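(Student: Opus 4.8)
The plan is to verify the defining property of a submersion à la Hamilton directly: around every $f \in C^\infty(X,M)$ I will produce charts of $C^\infty(X,M)$ and $C^\infty(X,N)$ in which $p_\ast$ becomes a continuous linear projection of locally convex spaces. Recall that, given a local addition $\Sigma^Q$ on a manifold $Q$, the manifold $C^\infty(X,Q)$ carries charts of the form $\Gamma(f^\ast TQ) \supseteq \cO_f \to C^\infty(X,Q)$, $\tau \mapsto \Sigma^Q \circ \tau$, modelled on spaces of sections of pullback bundles. The guiding idea is to choose the local additions $\Sigma^M$ on $M$ and $\Sigma^N$ on $N$ so that they are \emph{intertwined} by $p$, i.e.
\[
 p \circ \Sigma^M = \Sigma^N \circ Tp
\]
on suitable neighbourhoods of the zero sections. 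Granting this, a direct computation shows that in the associated charts $p_\ast$ takes the form $\tau \mapsto Tp \circ \tau$, i.e.\ it is the pushforward $(Tp)_\ast \colon \Gamma(f^\ast TM) \to \Gamma((p\circ f)^\ast TN)$ induced by the fibrewise linear bundle map $Tp$.

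First I would construct the intertwined local additions. If a pair of sprays $S_M$ on $TM$ and $S_N$ on $TN$ is $Tp$-related, i.e.\ $T(Tp) \circ S_M = S_N \circ Tp$, then their geodesic flows are $Tp$-intertwined, so the exponential maps satisfy $p \circ \exp^M = \exp^N \circ Tp$ and the resulting local additions are intertwined as required. To obtain such sprays, choose any spray $S_N$ on $N$ (which exists because $N$ is $C^\infty$-paracompact) together with an Ehresmann connection for $p$, i.e.\ a smooth splitting $TM = V \oplus H$ of the short exact sequence $0 \to V \to TM \to p^\ast TN \to 0$, where $V = \ker Tp$ is the vertical bundle. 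In submersion charts for $p$ one writes down a $Tp$-related spray explicitly, horizontally lifting $S_N$ and adding a fibre spray; these local models are then glued to a global $Tp$-related spray $S_M$ by a smooth partition of unity on $M$, exploiting that $Tp$-relatedness is an affine condition and hence survives partition-of-unity combinations.

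With the intertwined charts in hand, it remains to show that the continuous linear map $(Tp)_\ast \colon \Gamma(f^\ast TM) \to \Gamma((p\circ f)^\ast TN)$ is a submersion of locally convex spaces, i.e.\ a split surjection. Pulling the splitting $TM = V \oplus H$ back along $f$ yields a topological direct sum decomposition $\Gamma(f^\ast TM) = \Gamma(f^\ast V) \oplus \Gamma(f^\ast H)$ with $\Gamma(f^\ast V) = \ker (Tp)_\ast$, while $Tp|_H \colon f^\ast H \to (p\circ f)^\ast TN$ is a bundle isomorphism inducing a topological isomorphism $\Gamma(f^\ast H) \cong \Gamma((p\circ f)^\ast TN)$. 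Hence $(Tp)_\ast$ is the projection onto $\Gamma(f^\ast H)$ followed by this isomorphism; restricting to the chart domains exhibits the desired submersion chart at $f$, and since $f$ was arbitrary, $p_\ast$ is a submersion.

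The hard part, I expect, is the global construction of the intertwined local additions — precisely the point at which the published finite-dimensional proof had a gap. Everything is transparent in submersion charts for $p$, but assembling the local $Tp$-related sprays into a single global one while preserving $Tp$-relatedness, and arranging that the resulting local additions have domains large enough for the two chart neighbourhoods to be compatible, is where the smooth partitions of unity afforded by $C^\infty$-paracompactness are indispensable. In the Banach setting one must moreover check that the sequence $0 \to V \to TM \to p^\ast TN \to 0$ splits smoothly; unlike in finite dimensions complementation of subbundles is not automatic, but here the local product structure coming from the submersion charts provides local splittings which are again patched by a partition of unity, so the $C^\infty$-paracompactness hypothesis on $M$ and $N$ is exactly what is needed to make the whole argument go through.
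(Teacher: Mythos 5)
Your proposal is correct, and it reaches Theorem~\ref{TheoremA} by a genuinely different construction of the adapted local addition on $M$. The paper builds $\Sigma_M$ as a composite of three ingredients: a spray on the vertical bundle $\mathcal{V}$ viewed as an anchored bundle (Lemma~\ref{lem:con_loc_add_subm}), parallel transport of an auxiliary linear connection on $\mathcal{V}$ along the horizontal curves $c_h(t)=\exp_{S_{\mathcal{H}}}(th)$ (whose smoothness requires Lemma~\ref{lem:smooth_aux}), and the spray on $\mathcal{H}$ lifted from $S_N$ (Lemma~\ref{lem:spray_lifting}); the verification that this composite is a local addition making the key diagram commute is the block-operator computation of Proposition~\ref{prop:custom_sigma}. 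You instead produce a single spray $S_M$ on all of $TM$ that is $Tp$-related to $S_N$, from which the intertwining $p\circ\exp_{S_M}=\exp_{S_N}\circ Tp$ is immediate by relatedness of vector fields; your crucial observation --- that relatedness to the \emph{fixed} spray $S_N$ is an affine condition, so it survives partition-of-unity gluing because the coefficients sum to $1$ pointwise and $T(Tp)$ is fibrewise linear (exactly as (S1)--(S3) survive, cf.\ the convexity argument in Lemma~\ref{lem:ex_spray_anchor}) --- is sound, and the local lifts do exist: in a submersion chart where $S_N(x,\xi)=(x,\xi;\xi,A_x(\xi,\xi))$ one may take $S_M((x,y),(\xi,\eta))=((x,y),(\xi,\eta);(\xi,\eta),(A_x(\xi,\xi),0))$. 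Since $Tp=0\oplus Tp|_{\mathcal{H}}$ under the Ehresmann splitting, your endgame (the chart representative $(Tp)_\ast$, split via $\Gamma_c(f^\ast\mathcal{V})\oplus\Gamma_c(f^\ast\mathcal{H})$ and the bundle isomorphism $f^\ast\mathcal{H}\cong(p\circ f)^\ast TN$) coincides with the paper's. What each approach buys: yours dispenses entirely with the linear connection on $\mathcal{V}$, parallel transport, the notion of a local addition on $p$, and Proposition~\ref{prop:custom_sigma}, which is a real simplification --- and your $\Sigma_M$ even restricts to a fibre-preserving map on $\mathcal{V}$, since $Tp(v)=0$ forces $p(\exp_{S_M}(v))=p(\pi_M(v))$; the paper's longer route develops sprays on anchored bundles and the fibred local addition $(\pi_{\mathcal{V}},\exp_{S_{\mathcal{V}}})$, structures of independent interest beyond the theorem. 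Two points you leave implicit (neither is a gap, both being the standard facts the paper itself invokes): that $\exp_{S_M}$ becomes a local addition only after shrinking its domain, via the inverse-function-theorem-plus-paracompactness argument of Lemma~\ref{lem:loc2glob_emb} and Corollary~\ref{cor:exp_locadd} applied to your glued spray, with the further shrinking $\Omega_M\cap(Tp)^{-1}(\Omega_N)$ so the chart images match; and that the induced linear maps on the spaces $\Gamma_c$ of compactly supported sections are continuous in the inductive limit topology, for which one invokes Gl\"ockner's $\Omega$-lemma exactly as in the paper's final proof.
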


Here $C^\infty$-paracompactness means that every open cover of the manifold (as a topological space) admits a subordinate smooth partition of unity. It implies in particular, that the model spaces of the manifolds $M$, $N$ admit smooth norms and it ensures the existence of a local addition on the Banach manifolds. This is a crucial requirement to construct the manifolds of mappings considered here.

In the original proof of the Stacey--Roberts lemma for finite-dimensional target manifolds in~\cite{AaS19}, Riemannian structures on the target manifolds were exploited. These tools are not available on the Banach manifolds which generally admit only weak Riemannian structures (see~\cite{Sch23}). It turns out that an alternative proof can be given using the language of sprays on anchored bundles. As Theorem \ref{TheoremA} allows us to expand the scope to (certain) Banach manifolds, also the mentioned applications of the Stacey--Roberts lemma generalise to this broader regime. At the end of the present article we shall provide examples of applications which generalise essentially by the same proof to the Banach setting.

\section{Preliminaries on manifolds of mappings}
In this section, we recall results on manifolds of smooth mappings from a finite-dimensional manifold $X$ to a smooth (possibly infinite-dimensional) manifold~$M$. These manifolds cannot be modeled on Banach spaces, hence we need the generalisation of calculus known as Bastiani calculus. See Appendix~\ref{App:Bast}
for a brief recollection and references to the literature on calculus on infinite-dimensional manifolds.

%\begin{setup}
Let $X$ be a locally compact (thus finite-dimensional), paracompact smooth manifold, and ${E \rightarrow X}$ a smooth vector bundle (whose fibres are possibly locally convex spaces).
We write~$\Gamma(E)$ for the \emph{set of smooth sections} of $E$.
We write $0 \in \Gamma(E)$ for the zero section.
Let~${K \subseteq X}$.
Then we consider the subspace of smooth \emph{sections supported in}~$K$, i.e.,
\begin{displaymath}
 \Gamma_{K}(E) := \bigl\{ \sigma \in \Gamma(E) \mid \sigma|_{X \setminus K} = 0 \bigr\}.
\end{displaymath}
Topologize $\Gamma_K(E)$ with the compact-open $C^{\infty}$-topology induced by the inclusion $\Gamma_K(E) \subseteq C^{\infty}(M,E)$, cf.\ \cite{HaS17}.
We further define
\begin{displaymath}
 \Gamma_c (E) := \bigcup_{K \subseteq M \text{ compact}} \Gamma_K (E).
\end{displaymath}
As the spaces $\Gamma_K(E)$ are ordered by inclusion of the compact sets $K$, the \emph{space of compactly supported sections} $\Gamma_c(E)$ becomes the locally convex inductive limit of the $\Gamma_K(E)$. The resulting topology on $\Gamma_c(E)$ is indeed a Hausdorff vector topology by \cite[Lemma F.19\,(c)]{glo04}.
%\end{setup}

To construct charts for $C^{\infty}(X,M)$ around $f \in C^{\infty}(X,M)$, where $M$ is a possibly infinite-dimensional manifold, consider the space of sections of the pullback bundle $f^{*}TM \rightarrow X$. Charts are then constructed with the help of a local addition on $M$.

\begin{Definition}%\label{defn:loc_add}
 Let $M$ be a smooth manifold.
 A \emph{local addition} on $M$ consists of a pair $(U,\Sigma)$, where $U \subset TM$ is an open neighbourhood of the zero section $M \subset TM$, and $\Sigma \colon U \rightarrow M$,
 such that
 \begin{itemize}\itemsep=0pt
 \item $\Sigma$ restricts to the identity on $M \subset TM$,
 \item the map $\theta_\Sigma \colon U \rightarrow M \times M, \theta_\Sigma (v) := (\pi_M(v), \Sigma(v))$ is a diffeomorphism onto an open subset of $M \times M$.
 \end{itemize}
\end{Definition}

To motivate the terminology of local addition, consider the following (trivial) example. Let~$V$ be a locally convex vector space, and set $U = V \times V \cong TV$. The map $\Sigma (v,w) = v+w$ is then a local addition.

%\begin{setup}[Canonical charts for manifolds of mappings]\label{smooth_charts}
{\bf Canonical charts for manifolds of mappings.}
Let $(U,\Sigma)$ be a~local addition on $M$.
We let $U' := \theta_\Sigma (U) \subseteq M \times M$.
Set $O_{f} = \Gamma_c(f^{*}TM) \cap C^{\infty}(X, U)$, and
\begin{equation*}
 O'_{f} = \bigl\{ g \in C^{\infty}(X,M) \mid (f,g)(X) \subseteq U' \bigr\}.
\end{equation*}
One can show that $O_{f}$ is an open subset of $\Gamma_c(f^\ast TM)$. Similar statements hold for $O'_f$ and a~suitable topology on $C^\infty(X,M)$, see \cite[proof of Theorem~1.4]{GaS22} for details.
We then obtain an inverse pair of maps
\begin{align}\label{eq:can_charts}
 \phi_{f}\colon\ O_{f}\rightarrow O'_{f},\quad \tau \mapsto \Sigma \circ \tau, \qquad
 \phi_{f}^{-1}\colon\ O'_{f} \rightarrow O_{f},\quad g \mapsto \theta_{\Sigma}^{-1} \circ (f,g).
\end{align}
By \cite[Theorem~1.4]{GaS22}, these mappings form a smooth atlas for $C^{\infty}(X,M)$.
%\end{setup}

\begin{Proposition}[{special case of \cite[Theorem~1.4]{GaS22}}]
 Let $X$ be a paracompact finite-dimensional manifold and $M$ be a Banach manifold which admits a local addition.
 Then the canonical charts %from $\ref{smooth_charts}$
 form a smooth atlas for a manifold structure on $C^\infty (X,M)$. This manifold structure does not depend on the choice of local addition.
\end{Proposition}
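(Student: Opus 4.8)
The plan is to check the atlas axioms for the family $\{(O_f,\phi_f)\}_{f\in C^\infty(X,M)}$ and then the intrinsic nature of the resulting structure, splitting the work into four steps: (i) each $\phi_f$ is a chart; (ii) the charts cover $C^\infty(X,M)$; (iii) the transition maps are Bastiani-smooth; (iv) the structure is independent of the chosen local addition $(U,\Sigma)$.

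For step (i) I would take the openness of $O_f$ in the model space $\Gamma_c(f^*TM)$ as essentially given (it is the statement flagged before the proposition, and follows since $U\subseteq TM$ is open, so that $C^\infty(X,U)$ is open in $C^\infty(X,TM)$ and the condition is controlled on a single compact set, hence compatible with the inductive-limit topology). Bijectivity of $\phi_f$ onto $O'_f$ is then a formal consequence of $\theta_\Sigma$ being a diffeomorphism onto $U'$: its inverse $g\mapsto\theta_\Sigma^{-1}\circ(f,g)$ does produce a section of $f^*TM$, and this section is compactly supported because $\Sigma|_M=\id$ forces $\theta_\Sigma^{-1}(f(x),f(x))$ to be the zero vector, so the section vanishes wherever $g$ agrees with $f$ — and the elements of $O'_f$ are exactly those $g$ differing from $f$ on a compact set. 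Step (ii) is immediate from the same identity $\theta_\Sigma(0_{f(x)})=(f(x),f(x))$: it shows $(f,f)(X)$ lies in $U'$, so $f\in O'_f$ and $\phi_f^{-1}(f)=0$.

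The heart of the argument is step (iii). For $f_0,f_1$ with overlapping chart domains, the transition $\phi_{f_1}\circ\phi_{f_0}^{-1}$ sends a section $\tau$ of $f_0^*TM$ to the section
\begin{equation*}
 x\longmapsto\theta_\Sigma^{-1}\bigl(f_1(x),\Sigma(\tau(x))\bigr)
\end{equation*}
of $f_1^*TM$, which is precisely the pushforward of $\tau$ by a smooth fibre-respecting map between open subsets of the bundles involved. I would deduce its smoothness from the general smoothness of pushforwards for manifolds of mappings (the $\Omega$-lemma), combined with the exponential law $C^\infty(X\times P,TM)\cong C^\infty(P,C^\infty(X,TM))$ used to test smoothness against an arbitrary parameter manifold $P$. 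This is also where I expect the main obstacle: since $\Gamma_c(f^*TM)$ is a non-Banach locally convex space — an inductive limit of the Fréchet spaces $\Gamma_K(f^*TM)$ — no Banach inverse-function machinery is available, and smoothness must be established in Bastiani calculus while carefully tracking the behaviour of the pushforward on the inductive-limit topology and the supports. This technical core is exactly what \cite[Theorem~1.4]{GaS22} provides.

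For step (iv), given a second local addition $(\tilde U,\tilde\Sigma)$ with associated charts, I would show that $\id_{C^\infty(X,M)}$ is a diffeomorphism between the two candidate structures by verifying that each composite of a $\tilde\Sigma$-chart with an inverse $\Sigma$-chart is smooth. Pointwise this composite is again a pushforward, now along $v\mapsto\tilde\theta_{\tilde\Sigma}^{-1}(\pi_M(v),\Sigma(v))$, so the very same smoothness lemma from step (iii) applies and the two atlases are mutually compatible, yielding the asserted independence.
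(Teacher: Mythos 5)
Your proposal is correct and ends up resting on exactly the same foundation as the paper, which offers no independent argument but simply quotes the statement as a special case of \cite[Theorem~1.4]{GaS22}: your steps (i), (ii), (iv) are routine bookkeeping (bijectivity via $\theta_\Sigma$, the identity $\theta_\Sigma(0_{f(x)})=(f(x),f(x))$, mutual compatibility of the two atlases), and you correctly identify that the only hard point --- Bastiani-smoothness of the pushforward-type transition maps on the inductive-limit spaces $\Gamma_c(f^\ast TM)$ --- is precisely what the cited theorem supplies. The one caveat worth noting is that your bijectivity argument implicitly uses that members of $O'_f$ agree with $f$ off a compact set, which is part of the correct definition of the canonical chart domains in \cite{GaS22} rather than a consequence of the condition $(f,g)(X)\subseteq U'$ alone; with that reading, your outline matches the actual proof.
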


The following result is standard (see, e.g., \cite[Section 2]{Sch23}), but a proof for it has, to the best of our knowledge, not been published for the setting (non-compact source manifold and infinite-dimensional manifold as target) considered in the present article. We include its proof for the readers convenience.

\begin{Proposition}\label{prop:smooth_pf}
Let $X$ be a $\sigma$-compact manifold and $M,N$ locally convex manifolds which admit local additions. If $f \colon M \rightarrow N$ is smooth, then the pushforward
\begin{displaymath}f_\ast \colon \ C^\infty (X,M) \rightarrow C^\infty (X,N),\qquad f_\ast (g) =f \circ g,
\end{displaymath}
is smooth.
\end{Proposition}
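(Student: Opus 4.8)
The plan is to check smoothness of $f_\ast$ in the canonical charts from \eqref{eq:can_charts}, exploiting that smoothness is a local property and that these charts form an atlas. Fix $g \in C^\infty(X,M)$ and set $h := f_\ast(g) = f \circ g$. Choose local additions $(U_M,\Sigma_M)$ on $M$ and $(U_N,\Sigma_N)$ on $N$, with associated canonical charts $\phi_g \colon O_g \to O'_g$ around $g$ and $\phi_h \colon O_h \to O'_h$ around $h$. The first step is a domain issue: after shrinking $O_g$ if necessary, I would show that $\bigl(h, f\circ\Sigma_M\circ\tau\bigr)(X) \subseteq \theta_{\Sigma_N}(U_N)$ for all $\tau \in O_g$, so that $f_\ast(\phi_g(O_g)) \subseteq O'_h$ and the local representative $\psi := \phi_h^{-1} \circ f_\ast \circ \phi_g$ is defined on an open neighbourhood of $0$ in $\Gamma_c(g^\ast TM)$.

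The next step is to compute $\psi$ pointwise and recognise it as a section pushforward. For $\tau \in O_g$ and $x \in X$ one has $(\phi_g\tau)(x) = \Sigma_M(\tau(x))$, hence $(f_\ast\phi_g\tau)(x) = f\bigl(\Sigma_M(\tau(x))\bigr)$ and therefore
\begin{equation*}
 \psi(\tau)(x) = \theta_{\Sigma_N}^{-1}\bigl(f(g(x)),\, f(\Sigma_M(\tau(x)))\bigr) \in T_{f(g(x))}N .
\end{equation*}
Thus $\psi(\tau) = \omega \circ \tau$, where $\omega \colon V \to h^\ast TN$ is the fibrewise map over $\mathrm{id}_X$, defined on the open neighbourhood $V$ of the zero section of $g^\ast TM$ on which the expression makes sense, by $\omega_x(v) = \theta_{\Sigma_N}^{-1}\bigl(f(g(x)), f(\Sigma_M(v))\bigr)$. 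I would then record the relevant properties of $\omega$: it is smooth, being a composite of $\Sigma_M$, $f$, $\theta_{\Sigma_N}^{-1}$ and the bundle structure maps; it is fibre-preserving over $X$; and, since $\Sigma_M$ and $\Sigma_N$ restrict to the identity on the base, $\omega_x(0_{g(x)}) = \theta_{\Sigma_N}^{-1}(f(g(x)),f(g(x))) = 0_{h(x)}$. In particular $\omega\circ\tau$ vanishes wherever $\tau$ does, so the pushforward $\tau \mapsto \omega\circ\tau$ maps $\Gamma_K(g^\ast TM)$ into $\Gamma_K(h^\ast TN)$ for every compact $K \subseteq X$.

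It remains to show that $\psi$, now identified as the pushforward induced by the smooth fibrewise map $\omega$, is smooth as a map between spaces of compactly supported sections. The main obstacle—and the precise point where the $\sigma$-compact, non-compact source $X$ forces additional care—is that $\Gamma_c$ is only the locally convex inductive limit $\varinjlim_K \Gamma_K$ rather than a Fréchet space, so smoothness cannot simply be read off from a single local model. I would handle this by first establishing smoothness of the pushforward on each Fréchet step $\Gamma_K$, using the local (compact-source) version of the $\Omega$-lemma for sections together with the chart description of $C^\infty(X,M)$ and $C^\infty(X,N)$ as in \cite{GaS22,AaGaS20}, and then transferring this to the inductive limit: because $\omega$ preserves supports, the pushforward respects the filtration by the $\Gamma_K$, and smoothness on $\Gamma_c$ follows from the stepwise criterion for maps on such regular inductive limits. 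Patching these local representatives over a locally finite chart cover of $X$, available since $X$ is $\sigma$-compact, finally yields smoothness of $f_\ast$.
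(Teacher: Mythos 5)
Your set-up coincides with the paper's proof: you test smoothness in the canonical charts \eqref{eq:can_charts} and identify the local representative as the superposition operator $\tau \mapsto \omega \circ \tau$ for the smooth fibre-preserving map $\omega = \theta_{\Sigma_N}^{-1}\circ (f\circ g, f \circ \Sigma_M)$ over $\id_X$, defined on a zero-section neighbourhood of $g^\ast TM$ and mapping the zero section to the zero section; your extra care with the domain (shrinking so that the representative lands in the target chart) is fine. The gap is in your final step. You assert that, because $\omega$ preserves supports, ``smoothness on $\Gamma_c$ follows from the stepwise criterion for maps on such regular inductive limits.'' No such criterion exists in Bastiani calculus: for a \emph{nonlinear} map on a strict (LF)-space such as $\Gamma_c$, smoothness of every restriction to a step $\Gamma_K$ --- even together with compatibility with the filtration by supports --- does not imply continuity, because the locally convex inductive limit topology is strictly coarser than the topological direct limit topology, and stepwise continuity is automatic only for the latter (for linear maps the universal property saves you; for nonlinear maps it does not). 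Concretely, on $\cD(\R)$, the locally convex inductive limit of the spaces $\cD_{[-n,n]}$ of test functions, the function $b(\gamma)=\sum_{n\geq 1}\gamma(n)\,\gamma^{(n)}(0)$ restricts to a finite sum, hence a continuous polynomial, on every step, yet is unbounded on every $0$-neighbourhood of $\cD(\R)$: take $\gamma=\alpha+\beta$ with $\beta$ a fixed small bump near $n$ and $\alpha$ supported near $0$ with $\|\alpha\|_{C^{k}}$ small but $\alpha^{(n)}(0)$ arbitrarily large, where $n>k$ and $k$ is the order of a seminorm controlling the given neighbourhood on the step containing $\alpha$. Moreover $\gamma \mapsto b(\gamma)\cdot\gamma$ is support-decreasing and maps each step smoothly into itself, yet is still discontinuous; so support preservation plus stepwise smoothness is genuinely insufficient.

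What rescues the pushforward is not that it respects the filtration but that it is \emph{pointwise local}: $(\omega\circ\tau)(x)$ depends only on $x$ and $\tau(x)$. Converting this locality into Bastiani smoothness on the inductive limit is precisely the nontrivial content of Gl\"ockner's $\Omega$-lemma with parameters, \cite[Theorem~F.23]{glo04}, applicable by \cite[Lemma F.19\,(c)]{glo04} since $X$ is $\sigma$-compact --- this is exactly what the paper cites at the point where your argument appeals to the stepwise criterion. Its proof is the partition-of-unity/patched-space argument over a locally finite cover that your closing sentence gestures at but does not carry out. So either cite that theorem directly (which collapses your argument into the paper's proof) or replace the stepwise criterion by a genuine patching argument exploiting pointwise locality; as written, the decisive step rests on a principle that is false.
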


\begin{proof}
Since smoothness is a local property, we may test it in suitable charts. Let~$\Sigma_M$ and~$\Sigma_N$ be the local additions on $M$ and $N$ respectively. For $g \in C^\infty (X,M)$ and $f \circ g$, we pick the~canonical charts \smash{$\bigl(O'_g,\phi_g\bigr)$} and \smash{$\bigl(O'_{f\circ g},\phi_{f\circ g}\bigr)$} with respect to the choice of local additions. Then the pushforward becomes in these canonical charts
\begin{displaymath}\phi_{f\circ g} \circ f_* \circ \phi^{-1}_f \colon\ \Gamma_{g,c} \rightarrow \Gamma_{f\circ g, c},\qquad \tau \mapsto \bigl(\theta_{\Sigma_N}^{-1} \circ f \circ \Sigma_M\bigr)_*(\tau).\end{displaymath}
Now the mapping $\theta_{\Sigma_N}^{-1} \circ f \circ \Sigma_M \colon g^{\ast}TM \supseteq g^\ast U_{\Sigma_M} \rightarrow (f\circ g)^\ast TN$ is a smooth map defined on an open neighborhood of the zero section, takes the zero section to itself and descends to the identity. Hence the smoothness of the pushforward follows from Gl\"ockner's $\Omega$-lemma with parameters \cite[Theorem~F.23]{glo04} which is applicable due to \cite[Lemma F.19\,(c)]{glo04} since $X$ is $\sigma$-compact.
\end{proof}

\section{Geometric constructions on Banach manifolds}

In this section, we consider the target manifolds of the manifolds of mappings. To generalise the Stacey--Roberts lemma we need geometric tools on Banach manifolds.
We start with a~section on connections and related objects on Banach manifolds.
While all of these results are standard in the finite-dimensional setting, it is worth pointing out the validity of these results on Banach manifolds which are $C^\infty$-paracompact.
This means that the manifolds are paracompact as topological spaces, and every open covering admits a subordinate smooth partition of unity. In particular, the model Banach spaces thus admit smooth bump functions, see~\cite{BaF66}.
In~infinite dimensions this is a restriction, see \cite[Appendix~A.4]{Sch23} and the references therein. On~$C^\infty$-paracompact Banach manifolds, the typical local-to-global arguments employing smooth partitions of unity are available.

\subsection{Connections on Banach vector bundles}

By a \emph{Banach vector bundle}, we will denote a vector bundle over a Banach manifold whose fibres are all Banach spaces. Basic differential geometry for this setting can, e.g., be found in Lang's book~\cite{La01}.

\begin{Definition}\label{defn:connect_lin}
Let $\pi \colon E \rightarrow M$ be a Banach vector bundle with typical fibre $F$. Denote by $H$ the model space of $M$. A \emph{linear connection} is a bundle map $K \colon TE \rightarrow E$ over $\pi$
\begin{displaymath}\begin{tikzcd}
TE \arrow[r,"K"]\arrow[d,"\pi_E"] & E \arrow[d,"\pi"]\\
E \arrow[r,"\pi"] & M
\end{tikzcd}\end{displaymath}
such that in bundle trivialisations $(U,\Phi)$ of $E$ and $(TU,T\Phi)$ of $TE$ we have \begin{displaymath}\Phi \circ K\circ T\Phi(u,\xi, X, \eta) = (u,\eta + B_U (u)(X,\xi)),\end{displaymath} where $B_U \colon U \rightarrow \mathrm{Bil} (H,F;F)$ is smooth as a map into the continuous bilinear mappings, \mbox{\cite[Definition 1.5.9]{Kli95}}, \begin{displaymath}
 \mathrm{Bil}(H,F;F) := \{\beta \colon H \times F \rightarrow F \mid \beta \text{ continuous and bilinear}\}.
\end{displaymath}
\end{Definition}

\begin{Remark}\label{rem:ex_lincon}
Every Banach vector bundle over a $C^\infty$-paracompact base admits linear connections, \cite[Theorem 1.5.15]{Kli95} (see also \cite{Eli67} for applications for manifolds of mappings).

By smooth paracompactness of the base, the existence of a linear connection on $TM$ is equivalent to the existence of a covariant derivative and the existence of a spray on $M$ (see \mbox{\cite[Section VIII]{La01}} or \cite[Section 1.5]{Kli95} for details).
\end{Remark}

We have a notion of parallelism with respect to the chosen linear connection.
%\begin{setup}
\label{setup:parallel_tp}
Let $\pi \colon E \rightarrow M$ be a vector bundle with a linear connection $K$ and $E_m := \pi^{-1}(m)$.
Then for every $\gamma \colon [0,1] \rightarrow M$ there is a unique map $P^\gamma\colon \gamma^\ast E \rightarrow \gamma^\ast E$ called \emph{parallel translation}.
Localising on an open set $U$ in a bundle trivialisation (which we suppress), parallel transport is the unique solution $\eta$ to the linear differential equation (see, e.g.,~\mbox{\cite[Lemma~1.6.14]{Kli95}})
\begin{align}\label{eq:PT_diffeo}
\dot{\eta}(t) + B_U (\gamma(t)) (\dot{\gamma}(t),\eta(t))=0,
\end{align}
where $B_U$ is the smooth map induced by $K$ as in Definition~\ref{defn:connect_lin}.
In particular, for every $t,s \in [0,1]$ the assignment
$P_{s,t}^\gamma \colon E_{\gamma (s)} \rightarrow E_{\gamma (t)}$
is a Banach space isomorphism.
%\end{setup}

We come now to a type of connection which is (in general) not a linear connection.

\begin{Definition}
Let $p \colon M \rightarrow N$ be a surjective submersion between $C^\infty$-paracompact Banach manifolds (here submersion is to be understood in the sense of Definition~\ref{defn:submersion}). Recall the~following standard definitions:
\begin{enumerate}\itemsep=0pt
\item The \emph{vertical bundle} $\mathcal{V} \subseteq TM$ is the vector subbundle fibre-wise given by $\ker T p$.
\item An \emph{Ehresmann connection} for $p$ is a smooth subbundle $\mathcal{H} \subseteq TM$, called the \emph{horizontal bundle} which complements $\cV$, i.e., $TM=\mathcal{V} \oplus \mathcal{H}$.
Equivalently, an Ehresmann connection is a splitting of the short exact sequence of vector bundles
\begin{displaymath}0 \rightarrow \cV \rightarrow TM \xrightarrow{(\pi_M,{\rm d}p)} p^{\ast}TN \rightarrow 0.\end{displaymath}
Here the map $(\pi_M,{\rm d}p) \colon TM \rightarrow p^\ast TN$ is the bundle map over the identity given fibre wise by the derivative $T_xp \colon T_x M \rightarrow T_{p(x)}N$ for $x\in M$.
\end{enumerate}
\end{Definition}

\begin{Lemma}\label{lem:ex_Ehresmann}
Let $M$ be a $C^\infty$-paracompact manifold and $p\colon M \rightarrow N$ be a surjective submersion. Then there exists an Ehresmann connection $\cH$ for $p$.
\end{Lemma}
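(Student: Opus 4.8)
The plan is to realise the horizontal bundle $\cH$ as the image of a globally defined splitting of the short exact sequence
\[
0 \rightarrow \cV \rightarrow TM \xrightarrow{(\pi_M,{\rm d}p)} p^\ast TN \rightarrow 0,
\]
which I would assemble from local splittings using a smooth partition of unity on $M$.

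First I would establish local existence of splittings. Since $p$ is a submersion (Definition~\ref{defn:submersion}), about every point of $M$ there is a submersion chart in which $p$ becomes, up to diffeomorphism, a linear projection $H_1 \times H_2 \rightarrow H_1$ of Banach spaces with $H_2$ the (complemented) model of the fibre of $\cV$. In such a chart the sequence splits tautologically, giving a smooth local splitting $s_i \colon (p^\ast TN)|_{U_i} \rightarrow TM|_{U_i}$, i.e.\ a vector bundle morphism over the identity with $(\pi_M,{\rm d}p)\circ s_i = \id$ on $U_i$. Cover $M$ by such open sets $\{U_i\}_{i\in I}$.

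Next, since $M$ is $C^\infty$-paracompact, I would pick a smooth partition of unity $\{\rho_i\}_{i\in I}$ subordinate to $\{U_i\}$. Each $\rho_i s_i$ extends by zero to a smooth global section of the bundle $\operatorname{Hom}(p^\ast TN, TM)$, and local finiteness of $\{\supp \rho_i\}$ makes $s := \sum_{i\in I}\rho_i s_i$ a well-defined smooth bundle morphism $p^\ast TN \rightarrow TM$. The key point is that this combination is again a splitting: because $(\pi_M,{\rm d}p)$ is fibrewise linear and each $s_i$ satisfies $(\pi_M,{\rm d}p)\circ s_i = \id$, one computes
\[
(\pi_M,{\rm d}p)\circ s = \sum_{i} \rho_i\,(\pi_M,{\rm d}p)\circ s_i = \Bigl(\sum_i \rho_i\Bigr)\id = \id .
\]
Conceptually, splittings form an affine space (two of them differ by a morphism into $\ker(\pi_M,{\rm d}p)=\cV$), and a partition of unity realises exactly an affine combination with coefficients summing to $1$; an arbitrary sum of splittings would fail, but this one does not.

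Finally I would set $\cH := s(p^\ast TN)$. The identity $(\pi_M,{\rm d}p)\circ s = \id$ forces $s$ to be fibrewise a split injection whose image complements $\cV_x$ in each $T_xM$, so $\cH$ is a smooth subbundle with $TM = \cV \oplus \cH$, which is the desired Ehresmann connection. The only genuinely delicate points are the two just highlighted: securing the \emph{local} splittings, which in the Banach category requires the submersion charts to exhibit $\ker T_x p$ as a complemented subspace, and checking that the partition-of-unity combination stays within the affine space of splittings. Everything else---smoothness, local finiteness, and the subbundle property---is routine once $C^\infty$-paracompactness provides the partition of unity.
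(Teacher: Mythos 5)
Your proposal is correct and follows essentially the same route as the paper's proof: local splittings of the sequence $0 \rightarrow \cV \rightarrow TM \rightarrow p^{\ast}TN \rightarrow 0$ obtained from submersion charts, glued by a smooth partition of unity, with the observation that a partition-of-unity combination of splittings is again a splitting. The paper phrases the gluing via the extended-by-zero maps $\hat{\sigma}_{\alpha}$ rather than your affine-space language, but the argument is the same.
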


\begin{proof}
 Pick a family of submersion charts $(U_\alpha, \psi_\alpha,\varphi_\alpha)_{\alpha}$ for $p$ such that the domains $U_\alpha$ of $\psi_\alpha$ cover $M$.
 We obtain commutative squares
 \begin{equation*}
 \begin{tikzcd}
 M \supseteq U_{\alpha} \ar[r,"\psi_{\alpha}"] \ar[d,"p", shift left=12pt] & U'_{\alpha}\subseteq E \times F \ar[d] & & TM \supseteq TU_{\alpha} \ar[r,"T\psi_{\alpha}"] \ar[d,"Tp", shift left=12pt] & U_{\alpha}' \times (E \times F) \ar[d] \\
 N \supseteq V_{\alpha} \ar[r,"\varphi_{\alpha}"] & V'_{\alpha} \subseteq E, & & TN \supseteq TV_{\alpha} \ar[r,"T\varphi_{\alpha}"] & V'_{\alpha} \times E
 \end{tikzcd}
 \end{equation*}
 and
 \begin{equation*}
 \begin{tikzcd}
 p^{*}TN\supseteq p^{*}TV_{\alpha} \ar[d, shift left=10pt] \ar[r, dashed,"\hat{\psi}_{\alpha}"] & U'_{\alpha} \times E \ar[d] \\
 M \supseteq U_{\alpha} \ar[r,"\psi_{\alpha}"] & U'_{\alpha}.
 \end{tikzcd}
 \end{equation*}
 Where the dashed arrow is given by
 \begin{align*}
 p^{*}TV_{\alpha} \subseteq U_{\alpha} \times TV_{\alpha} \rightarrow U'_{\alpha} \times E,\qquad \hat{\psi}_{\alpha}(u,v) := (\psi_{\alpha}(u), \mathrm{pr}_{2}(T\varphi_{\alpha}(v))).
 \end{align*}
 We thus see that a splitting, denoted by $\sigma_{\alpha}$, of the sequence
 \begin{equation*}
 \begin{tikzcd}
 0 \ar[r] & \mathcal{V} \ar[r] & TU_{\alpha} \ar[r] & p^{*}TV_{\alpha} \ar[r] \ar[l,bend right, dashed, "\sigma_{\alpha}"'] & 0
 \end{tikzcd}
 \end{equation*}
 is given by
 \smash{$
 p^{*}TV_{\alpha} \xrightarrow{\hat{\psi}_{\alpha}} U'_{\alpha} \times E \hookrightarrow U'_{\alpha} \times (E \times F) \xrightarrow{(T\psi_{\alpha})^{-1}} TU_{\alpha}.
 $}
 Let $h_{\alpha}$ be a smooth partition of unity subordinate to the family $U_{\alpha}$.
 We observe that the map
 \begin{equation*}
 \hat{\sigma}_{\alpha}(u,v) := \begin{cases}
 h_{\alpha}(u) \sigma_{\alpha}(v) & \text{if } u \in U_{\alpha}, \\
 0 & \text{if } u \notin U_{\alpha}
 \end{cases}
 \end{equation*}
 is a smooth map $p^{*}TN \rightarrow TM$.
 It follows that $\sum_{\alpha} \hat{\sigma}_{\alpha}$ is an Ehresmann connection.
\end{proof}

\subsection{Sprays on anchored bundles}
For our construction of manifolds of mappings, local additions are needed. Since Banach manifolds in general do not admit a natural Riemannian manifold structure (whose exponential map can be used as a local addition), another source for local additions is needed. Sprays and their exponential mappings \cite{La01}, provide a convenient way to replace the missing structure. We need a generalisation of the classical sprays to sprays of anchored Banach bundles, see \cite{ana11,CaP12,CaMaS20}.

\begin{Definition}
Let $\pi \colon E \rightarrow M$ be a Banach vector bundle with typical fibre $F$. Together with a~vector bundle morphism $\rho \colon E \rightarrow TM$, the \emph{anchor}, we call the triple $(\pi,E,\rho)$ an \emph{anchored Banach bundle $($over $M)$}.
\end{Definition}

\begin{Example}\label{ex:subbun_anchor}
Every vector subbundle of $TM$ becomes an anchored bundle with respect to the trivial anchor given by inclusion (cf.\ \cite[Example~3.2]{CaP12}).
If $p\colon M \rightarrow N$ is a surjective submersion, there are several anchored bundles associated to~$p$. The vertical bundle $\cV$ and for every Ehresmann connection the horizontal bundle $\cH$ are subbundles, hence anchored. Note that an Ehresmann connection described as the section $\sigma_\cH$ of $(\pi_M,{\rm d}p)$ turns
\begin{displaymath}\sigma_{\cH} \colon\ p^\ast TN \rightarrow \cH \subseteq TM
\end{displaymath}
into an anchored bundle.
\end{Example}

Anchored Banach bundles are related to, but weaker structures than Banach Lie algebroids (see, e.g., \cite{AaGaS20, BaGaP19}). The anchor allows us to generalise second order differential equations to vector bundles as follows.

\begin{Definition}\label{defn:spray}
Let $(\pi, E, \rho)$ be an anchored Banach vector bundle.
A smooth map $S\colon E \rightarrow TE$ on $E$ is called a \emph{semispray} if it satisfies
\begin{itemize}\itemsep=0pt
\item[\hypertarget{sprayS1}{(S1)}] $\pi_{E} \circ S = \id_E$, where $\pi_E \colon TE\rightarrow E$ is the projection (i.e., $S$ is a vector field).
\item[\hypertarget{sprayS2}{(S2)}] $S$ \emph{lifts the anchor} in the sense that $T\pi \circ S = \rho$.
\end{itemize}
For every $\lambda \in \R$ we let $h_\lambda \colon E \rightarrow E$ be the vector bundle morphism which in every fibre $E_x$ is given by multiplication with $\lambda$.
If a semispray satisfies the condition
\begin{itemize}\itemsep=0pt
\item[\hypertarget{sprayS3}{(S3)}] $S\circ h_\lambda = Th_\lambda (\lambda S), \qquad \forall \lambda \in \R,$
\end{itemize}
then we call $S$ a \emph{spray} on the anchored Banach bundle.
\end{Definition}

\begin{Example}
For the trivial anchored bundle $(\pi_M, TM,\id_{TM})$, a spray as per Definition \ref{defn:spray} is just a spray in the usual sense of differential geometry \cite[Chapter~IV, Section~3]{La01}.
In particular, if~$(M,g)$ is a strong Riemannian manifold \cite[Section~4]{Sch23}, then $TM$ admits a spray, called the \emph{metric spray} compatible with the Riemannian metric~$g$.
\end{Example}

Since sprays are vector fields, the notion of relatedness (via a smooth map) makes sense for them. We will say that sprays $S_1$, $S_2$ are $\varphi$\emph{-related} if they are $\varphi$-related as vector fields, i.e., $T\varphi \circ S_1 = S_2 \circ \varphi$ holds.
Given a vertical isomorphism $\varphi$, the next Lemma~\ref{lem:anchoredIso} constructs a~$\varphi$-related spray to a given spray.

\begin{Lemma}\label{lem:anchoredIso}
 Let $(\pi_{i},E_{i},\rho_{i})$ be anchored Banach vector bundles on $M$.
 Let $\varphi \colon E_{1} \rightarrow E_{2}$ be a~$($vertical$)$ isomorphism of anchored Banach bundles on~$M$, i.e., $\pi_{1} = \pi_{2}\circ \varphi$ and $\rho_{1} = \rho_{2} \circ \varphi$.
 If $S\colon E_{1} \rightarrow TE_{1}$ is a spray, then $T\varphi \circ S \circ \varphi^{-1}\colon E_{2} \rightarrow TE_{2}$ is a spray.
 Moreover, $S$ is $\varphi$-related to $T \varphi \circ S \circ \varphi^{-1}$.
\end{Lemma}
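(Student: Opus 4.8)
The plan is to set $S_2 := T\varphi \circ S \circ \varphi^{-1}$ and to verify the three axioms (S1)--(S3) of Definition~\ref{defn:spray} directly, in each case reducing the statement for $S_2$ to the corresponding statement for $S$ by means of the defining properties of the vertical isomorphism $\varphi$ together with the standard naturality and functoriality of the tangent functor. The $\varphi$-relatedness requires no computation at all: by the very definition of $S_2$ one has $S_2 \circ \varphi = T\varphi \circ S \circ \varphi^{-1} \circ \varphi = T\varphi \circ S$, which is exactly the relatedness condition.

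For (S1) I would invoke the naturality of the tangent projection, $\pi_{E_2} \circ T\varphi = \varphi \circ \pi_{E_1}$, together with $\pi_{E_1} \circ S = \id_{E_1}$, to obtain
\[
\pi_{E_2} \circ S_2 = \varphi \circ \pi_{E_1} \circ S \circ \varphi^{-1} = \varphi \circ \varphi^{-1} = \id_{E_2}.
\]
For (S2) I would use functoriality of $T$ and the vertical condition $\pi_2 \circ \varphi = \pi_1$ to write $T\pi_2 \circ T\varphi = T(\pi_2 \circ \varphi) = T\pi_1$; then, using $T\pi_1 \circ S = \rho_1$ and $\rho_1 = \rho_2 \circ \varphi$,
\[
T\pi_2 \circ S_2 = T\pi_1 \circ S \circ \varphi^{-1} = \rho_1 \circ \varphi^{-1} = \rho_2 \circ \varphi \circ \varphi^{-1} = \rho_2.
\]

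The delicate point, and the one I expect to be the main obstacle, is (S3), where one must carefully keep apart two distinct vector bundle structures: the bundle $E_i \to M$, in whose fibres $\varphi$ is linear, and the tangent bundle $TE_i \to E_i$, in whose fibres the tangent map $T\varphi$ is linear. Writing $h_\lambda^{i}$ for the fibrewise multiplication by $\lambda$ on $E_i \to M$, and $m_\lambda^{i}$ for the fibrewise multiplication by $\lambda$ on $TE_i \to E_i$ (so that $\lambda S = m_\lambda^{1}\circ S$), fibrewise linearity of $\varphi$ gives $\varphi \circ h_\lambda^{1} = h_\lambda^{2} \circ \varphi$, hence also $\varphi^{-1} \circ h_\lambda^{2} = h_\lambda^{1} \circ \varphi^{-1}$; functoriality lifts the former to $T\varphi \circ Th_\lambda^{1} = Th_\lambda^{2} \circ T\varphi$; and fibrewise linearity of $T\varphi$ for the tangent bundle structures yields $T\varphi \circ m_\lambda^{1} = m_\lambda^{2} \circ T\varphi$. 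Chaining these identities and invoking (S3) for $S$ in the form $S \circ h_\lambda^{1} = Th_\lambda^{1} \circ m_\lambda^{1} \circ S$, I would then compute
\begin{align*}
S_2 \circ h_\lambda^{2}
&= T\varphi \circ S \circ h_\lambda^{1} \circ \varphi^{-1}
= T\varphi \circ Th_\lambda^{1} \circ m_\lambda^{1} \circ S \circ \varphi^{-1} \\
&= Th_\lambda^{2} \circ m_\lambda^{2} \circ T\varphi \circ S \circ \varphi^{-1}
= Th_\lambda^{2}(\lambda S_2),
\end{align*}
which is precisely (S3) for $S_2$. The only real difficulty is the bookkeeping: matching the tangent-bundle scalar multiplication hidden in the symbol $\lambda S$ against the fibrewise multiplication $h_\lambda$ on the base bundle, and verifying that $T\varphi$ intertwines both. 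Once the three commutation relations above are established, the verification is forced and purely formal.
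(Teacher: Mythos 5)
Your proposal is correct and follows essentially the same route as the paper: both verify (S1) and (S2) via naturality of the tangent projection and functoriality of $T$, and (S3) by conjugating the homothety relation through $\varphi$, with $\varphi$-relatedness immediate from the definition of $T\varphi\circ S\circ\varphi^{-1}$. Your only departure is the explicit notational separation of the two scalar multiplications ($h_\lambda$ on $E_i\rightarrow M$ versus the tangent-fibre multiplication in $\lambda S$), which the paper's computation uses implicitly; this is a clarification, not a different argument.
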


\begin{proof}
 We compute
 \begin{align*}
 &\pi_{2} \circ T\varphi \circ S \circ \varphi^{-1}= \varphi \circ \pi_{1} \circ S \circ \varphi^{-1} = \id_{E_{2}}, \qquad\text{and}\\
 &T\pi_{2} \circ T\varphi \circ S \circ \varphi^{-1}= T\pi_{1} \circ S \circ \varphi^{-1} = \rho_{1} \circ \varphi^{-1} = \rho_{2},
 \end{align*}
 and finally
 \begin{align*}
 T\varphi \circ S \circ \varphi^{-1} \circ h_{\lambda} &{}= T\varphi \circ S \circ h_{\lambda} \circ \varphi^{-1} = T \varphi \circ Th_{\lambda} (\lambda S) \circ \varphi^{-1} \\
 &{}= Th_{\lambda} \circ T\varphi \circ (\lambda S) \circ \varphi^{-1} = Th_{\lambda} \circ \lambda\bigl(T\varphi \circ S \circ \varphi^{-1}\bigr). \qedhere
\tag*{\qed}
\end{align*}
\renewcommand{\qed}{}
\end{proof}

For bundles over a $C^\infty$-paracompact base, a local to global argument yields the following.

\begin{Lemma}\label{lem:ex_spray_anchor}
Let $(\pi,E,\rho)$ be an anchored Banach bundle on a manifold $M$ which is $C^\infty$-paracompact, then there exists a spray on the anchored bundle.
\end{Lemma}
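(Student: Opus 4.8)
The plan is to run a local-to-global argument via partitions of unity, exploiting that the defining conditions (S1)--(S3) are preserved under convex combinations formed fibrewise in $TE$. The crucial point is that these combinations must be built from a partition of unity pulled back from the base $M$ along $\pi$, so that the patching is compatible with the fibrewise scaling $h_\lambda$ appearing in (S3). First I would establish local existence, then patch.

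For local existence, fix a bundle trivialisation over $U_\alpha$ with model space $H$ and typical fibre $F$, so that $E|_{U_\alpha} \cong U'_\alpha \times F$ and the anchor takes the form $\rho(u,e) = (u;\rho_\alpha(u)e)$ for a smooth map $\rho_\alpha \colon U'_\alpha \to L(F,H)$ into the continuous linear maps. Writing a vector field on $E$ in these coordinates as $(u,e) \mapsto (u,e;\delta u(u,e),\delta e(u,e))$, condition (S1) is automatic, (S2) forces $\delta u(u,e) = \rho_\alpha(u)e$, and (S3) forces $\delta u$ to be fibrewise linear (consistent with linearity of $\rho_\alpha(u)$) and $\delta e$ to be fibrewise quadratic. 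The simplest choice $\delta e \equiv 0$ then yields a smooth local spray $S_\alpha(u,e) = (u,e;\rho_\alpha(u)e,0)$ on $E|_{U_\alpha}$.

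To patch, choose a partition of unity $(h_\alpha)$ subordinate to the cover $(U_\alpha)$ --- available since $M$ is $C^\infty$-paracompact --- and define $S \colon E \to TE$ by $S(v) := \sum_\alpha h_\alpha(\pi(v)) S_\alpha(v)$, where for each $v$ the sum is taken inside the vector space $T_v E$ and each term is extended by $0$ where $h_\alpha \circ \pi$ vanishes. This is a locally finite sum of smooth maps, hence smooth. Condition (S1) holds because an affine combination of vectors in $T_v E$ with coefficients summing to $1$ again lies in $T_v E$. Condition (S2) holds because $T\pi$ is linear on the fibre $T_v E$, giving $T\pi(S(v)) = \sum_\alpha h_\alpha(\pi(v))\rho(v) = \rho(v)$.

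The main point, and the step requiring the most care, is (S3). Here one uses that $h_\lambda$ preserves fibres of $E$, so $\pi \circ h_\lambda = \pi$ and the coefficients $h_\alpha(\pi(v))$ are unaffected by applying $h_\lambda$; together with the fibrewise linearity of $Th_\lambda \colon T_v E \to T_{h_\lambda(v)}E$ this gives $S(h_\lambda(v)) = \sum_\alpha h_\alpha(\pi(v)) S_\alpha(h_\lambda(v)) = \sum_\alpha h_\alpha(\pi(v)) Th_\lambda(\lambda S_\alpha(v)) = Th_\lambda(\lambda S(v))$, where the middle equality is just (S3) for each $S_\alpha$ and the last uses linearity of $Th_\lambda$ on $T_v E$. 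This is exactly where pulling the partition of unity back from $M$ rather than using functions on $E$ is indispensable: had the coefficients genuinely depended on $v$, the scaling $h_\lambda$ would disturb them and (S3) would fail. I therefore expect the homogeneity condition (S3) to be the only subtle verification, the remainder being routine bookkeeping.
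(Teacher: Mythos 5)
Your proposal is correct and takes essentially the same route as the paper: local sprays built in bundle trivialisations (the paper allows a quadratic term $B(v,v)$ for a symmetric bilinear $B$, of which your choice $\delta e \equiv 0$ is the permissible special case $B=0$), then glued by a smooth partition of unity pulled back from the base $M$. Your explicit verification of (S1)--(S3) for the glued section, including the key observation that the coefficients must factor through $\pi$ so that $h_\lambda$ leaves them unchanged, is exactly the ``set of sprays is convex'' fact that the paper invokes without proof.
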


\begin{proof}
 Let $F$ be the typical fiber of the vector bundle $E \rightarrow M$.
 Let $\{U_{i}, \varphi_{i}\}_{i\in I}$ be a trivializing cover for $E \rightarrow M$, i.e.,~$U_{i} \subseteq M$ open, and \smash{$\varphi_{i}\colon E|_{U_{i}} \xrightarrow{\cong} U_{i} \times F$}.
 For every $i\in I$, we thus obtain an anchored Banach bundle $A_{i} := \bigl(\mathrm{pr}_{1},U_{i} \times F, \rho \circ \varphi_{i}^{-1}\bigr)$.
 Pick a continuous and symmetric bilinear map $B\colon F \times F \rightarrow F$.
 For every $i$, the map
 \begin{align*}
 S_{i}\colon\ U_{i} \times F \rightarrow TU_{i} \times F \times F \cong T(U_{i} \times F), \qquad
 S_i(x,v) = \bigl(\rho \bigl(\varphi_{i}^{-1}(x,v)\bigr),v,B(v,v)\bigr)
 \end{align*}
 defines a spray on $A_{i}$.

 By Lemma \ref{lem:anchoredIso}, the spray $S_{i}$ on $A_{i}$ yields a spray on $(\pi, E|_{U_{i}},\rho)$.
 By using a smooth partition of unity, one then obtains a globally defined section $S\colon E \rightarrow TE$.
 That this defines a~spray follows from the fact that the set of sprays is a convex subset of the set of sections of~${TE \rightarrow E}$.
\end{proof}

Anchored bundles are a setting in which one can make sense of sprays and differential equations whose right-hand side is given by a spray.
Other natural examples of sprays on anchored bundles arise in the theory of Lie groupoids and Lie algebroids, see \cite{CaMaS20,CaF03}.
The following definition builds on the usual proof for the existence of solutions to ordinary differential equations. In~the~classical settings of sprays on the tangent bundle the proofs are recorded in~\mbox{\cite[Chapter~IV, Section~4]{La01}}, the proofs easily adapt to the more general situation.

\begin{Definition}
For a spray $S$ on an anchored bundle $(\pi,E,\rho)$ over $M$, we denote its \emph{flow} (in the sense of flows of vector fields) by{\samepage
\begin{displaymath}
 \Fl^S \colon\ \R \times E \supseteq D \rightarrow E,\qquad (t,v) \mapsto \Fl_t^S (v),\end{displaymath} with $\{0\}\times E \subseteq D\subseteq \R \times E$ open, $\partial_t \Fl_{t}^{S}(v) = S\bigl(\Fl_{t}^{S}(v)\bigr)$ and $\Fl_0^S(v)=v$.}

Since $S$ vanishes on the zero section $0_E$ by (\hyperlink{sprayS3}{S3}), there exists an open neighborhood $\Omega \subseteq E$ of $0_E$ in $E$ such that $\Omega \cap \pi^{-1}(x)$ is convex for every $x \in M$ and the flow is defined up to time~$1$. Hence one defines the \emph{spray exponential map}
\begin{displaymath}\exp_S \colon\ \Omega \rightarrow M,\qquad \exp_S = \pi \circ \Fl^S_1.\end{displaymath}
\end{Definition}

Let us now state some properties of the flow curves of a spray. For this we define \emph{anchored paths} which in the Lie algebroid literature are also called $A$-paths (or admissible paths), see, e.g., \cite[\S1.1]{CaF03}.

\begin{Definition}
Let $(\pi,E,\rho)$ be an anchored bundle over $M$. A $C^1$-curve $c \colon I \rightarrow E$ on a~non-degenerate interval $I$ is called a $(\rho-)$\emph{anchored path} if for the base path $\pi (c(t))\colon I \rightarrow M$ the~following identity holds
\begin{displaymath}\rho(c(t))=\frac{{\rm d}}{{\rm d}t} \pi (c(t)), \qquad \forall t \in I.
\end{displaymath}
\end{Definition}

\begin{Lemma}\label{lem:spec_curv}
Let $(\pi, E,\rho)$ be an anchored bundle over $M$ with a spray $S$. Then
\begin{itemize}\itemsep=0pt
\item[$(a)$] every integral curve of $S$, i.e.,~$c \colon I \rightarrow E$, $\frac{{\rm d}}{{\rm d}t}c(t)=S(c(t))$, is an anchored path,
\item[$(b)$] if for $x \in M$ we denote by $\exp_S^x \colon \pi^{-1}(x)\rightarrow M$ the restriction of $\exp_S$ to the fibre, then $T_{0_x} \exp_S^x = \rho|_{\pi^{-1}(x)}^{T_xM}$.
\end{itemize}
\end{Lemma}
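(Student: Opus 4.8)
The plan is to prove both statements by working in a local bundle trivialisation, where the spray and its flow become concrete expressions that we can differentiate directly. Recall that in a trivialisation $E|_U \cong U \times F$, with $M$ modelled on $H$, the anchor $\rho$ is a bundle map $\rho \colon U \times F \to TU \cong U \times H$, and a semispray $S \colon E \to TE$ is by (\hyperlink{sprayS1}{S1}) a vector field on $E$, so locally $S(x,v) = ((x,v),(a(x,v),b(x,v)))$ for smooth maps $a \colon U \times F \to H$ and $b \colon U \times F \to F$. The lift condition (\hyperlink{sprayS2}{S2}), namely $T\pi \circ S = \rho$, forces the base component $a(x,v)$ to equal the anchor read off in the chart, i.e. $a(x,v) = \rho(x,v)$ (identifying $TU \cong U\times H$). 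This identification is the key structural fact I will use throughout.

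For part $(a)$: let $c \colon I \to E$ be an integral curve of $S$, write it locally as $c(t)=(x(t),v(t))$, so that the base path is $\pi(c(t))=x(t)$. The integral curve equation $\frac{\mathrm d}{\mathrm dt}c(t)=S(c(t))$ reads componentwise as $\dot x(t)=a(x(t),v(t))$ and $\dot v(t)=b(x(t),v(t))$. Using the identification $a=\rho$ from the previous paragraph, the first equation says precisely $\frac{\mathrm d}{\mathrm dt}\pi(c(t))=\dot x(t)=\rho(x(t),v(t))=\rho(c(t))$, which is exactly the anchored-path condition. Since this holds in every chart and the anchored-path condition is chart-independent (both sides transform as tangent vectors to the base), $c$ is a $\rho$-anchored path.

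For part $(b)$: I will compute the derivative of $\exp_S^x$ at the zero vector $0_x$ directly from the definition $\exp_S=\pi\circ\Fl_1^S$. The standard trick, exactly as in Lang's treatment of the ordinary spray case, is to exploit the homogeneity (\hyperlink{sprayS3}{S3}) to relate the time-$1$ flow to small-time behaviour: for $w\in\pi^{-1}(x)=E_x$ one has the rescaling identity $\Fl_1^S(\lambda w)$ related to $\Fl_\lambda^S(w)$, so that differentiating the curve $\lambda\mapsto\exp_S^x(\lambda w)=\pi(\Fl_1^S(\lambda w))$ at $\lambda=0$ reduces to differentiating $\lambda\mapsto\pi(\Fl_\lambda^S(w))$ at $\lambda=0$. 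The latter derivative is $\frac{\mathrm d}{\mathrm dt}\big|_{t=0}\pi(\Fl_t^S(w))$, and since $t\mapsto\Fl_t^S(w)$ is an integral curve of $S$ through $w$, part $(a)$ identifies this with $\rho(\Fl_0^S(w))=\rho(w)$. Hence $T_{0_x}\exp_S^x(w)=\rho(w)$ for all $w\in E_x$, which is the claimed equality $T_{0_x}\exp_S^x=\rho|_{\pi^{-1}(x)}^{T_xM}$.

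The main obstacle I anticipate is making the homogeneity rescaling in part $(b)$ fully rigorous: one must verify that $\Fl_t^S(\lambda w)=h_\lambda\big(\Fl_{\lambda t}^S(w)\big)$ (the anchored-bundle analogue of the classical relation between the flow and its rescalings), which follows from (\hyperlink{sprayS3}{S3}) by a uniqueness-of-solutions argument applied to the two curves $t\mapsto\Fl_t^S(\lambda w)$ and $t\mapsto h_\lambda\Fl_{\lambda t}^S(w)$, checking they satisfy the same initial value problem. Once this relation is established, the chain-rule computation of $T_{0_x}\exp_S^x$ is routine, and the appeal to part $(a)$ closes the argument cleanly.
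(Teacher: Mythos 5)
Your proposal is correct and takes essentially the same route as the paper: part $(a)$ is the chain-rule computation expressing $\frac{{\rm d}}{{\rm d}t}\pi(c(t))$ via $T\pi\circ S=\rho$ (which you carry out in a trivialisation rather than intrinsically), and part $(b)$ rests on exactly the paper's homogeneity relation $\pi\Fl^S_t(\lambda w)=\pi\Fl^S_{\lambda t}(w)$, obtained from (S3) by uniqueness of integral curves, followed by differentiation along the ray at $0_x$. The rescaling identity you flag as the main obstacle is precisely the step the paper also isolates (citing Lang's argument, which carries over), so your plan closes without gaps.
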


\begin{proof}
(a) For $v \in E$, let $I_v$ be the interval on which $\beta_v (t)= \Fl_{t}^S(v)$ is defined. Assume that~${s,t \in \R}$, such that for $v \in E$ the interval $I_v$ contains $st$.
The usual proof for flows of ordinary differential equations (see \cite[p.~106]{La01} which carries over) shows that this happens if and only if~${s \in I_{\beta_v(t)}}$ and then
\begin{align}
\pi\Fl_{s}^S(tv)= \pi \Fl^S_{st}(v).\label{flow:prop}
\end{align}
By construction $\beta_v$ is the integral curve of $S$ starting at $v$.
We compute the derivative of $\beta_v$ via the chain rule and (\hyperlink{sprayS2}{S2}).
Using {$\beta_v (t)=\Fl^S_t (v)$} it follows that
\begin{displaymath}
 \partial_t \bigl(\pi\bigl(\Fl^S_t (v)\bigr)\bigr) = T\pi \circ S \bigl(\Fl^S_t (v)\bigr) = \rho \bigl(\Fl^S_t (v)\bigr).\end{displaymath}
Hence integral curves are anchored paths.
(b) The classical proof from~\cite{La01} works. Compute the fibre derivative of the $\exp_S$ along a ray $t\mapsto tv \in \pi^{-1}(x) \subseteq E$ by using the chain rule and~\eqref{flow:prop} to obtain
\begin{align*}
T_{0_x}\exp_S (v)&=\left.\frac{{\rm d}}{{\rm d}t}\right|_{t=0} \exp_S (tv)= \left. \frac{{\rm d}}{{\rm d}t}\right|_{t=0} \pi \Fl^{S}_{1}(tv) = \left. \frac{{\rm d}}{{\rm d}t}\right|_{t=0} \pi \Fl^{S}_{t}(v) \\
&=T\pi \circ S\bigl(\Fl_0^S(v)\bigr) =\rho (v),
\end{align*}
where we used (\hyperlink{sprayS2}{S2}).
\end{proof}

Stronger results along the lines of Lemma \ref{lem:spec_curv} are recorded in \cite{ana11}, however we do not have need for them.

 Obviously, if $\exp_S$ is locally invertible at the zero section in $E$ then $\exp_S^x$ needs to restrict to an isomorphism.
 Hence if the anchor is not a bundle isomorphism, $\exp_S$ cannot be locally invertible.
 For examples of this behaviour, consider the sprays on one of the subbundles of the tangent bundle from Example \ref{ex:subbun_anchor}.
 This is in contrast to the following observation for the classical situation recorded in Corollary \ref{cor:exp_locadd} below.

There is a theory of connections for sprays on Lie algebroids, see, e.g., \cite{CaF03} and the references therein. Sprays on anchored bundles induce linear connections similar to sprays on the tangent bundle. However, we will not need this theory here.

\subsection{Sprays and submersions}
In this section, we prepare some constructions of special sprays for the proof of the Stacey--Roberts lemma.
Let $p\colon M \rightarrow N$ be a submersion, and $\cV = \ker Tp$ be the vertical subbundle.
The following definition is due to \cite[Definition 3.2.15]{ste24}.

\begin{Definition}
A \emph{local addition on $p$} is a pair $U \subseteq \cV$ open and containing the zero section together with an open embedding $\Psi \colon U \rightarrow M \times_p M$ into the fibre product such that the following diagram commutes:
\begin{equation}\label{diag:loc_add_on_sub}
\begin{tikzcd}
M \arrow[r,hook,"\Delta"] \arrow[d,"0"]& M \times_p M \arrow[d,"\text{pr}_1"] \\
U \arrow[ru,"\Psi"] \arrow[r,"\pi_{\cV}"] & \,M,
\end{tikzcd}
\end{equation}
where $\pi_{\cV} \colon \cV \rightarrow M$ is the basepoint projection, $\Delta$ the diagonal embedding and $\text{pr}_1$ denotes the projection onto the first coordinate in the fibre-product.
\end{Definition}

We remark that a local addition on a submersion is a weaker notion than a local addition on a manifold which respects the fibres of a submersion. The stronger notion has been used in constructions of manifolds of mappings (see, e.g., the recent \cite[Section~5.9]{Mic20} and the references therein). Before we establish the existence of a local addition on submersions, we will prove an auxiliary lemma.

\begin{Lemma}\label{lem:loc2glob_emb}
Let $\pi \colon E\rightarrow M$ be a Banach vector bundle and $q \colon M \rightarrow N$ be a submersion to a Banach manifold. Assume that $M$ is paracompact and $\Phi := (\pi,\phi) \colon E \supseteq \Omega \rightarrow M \times_q M$ is a smooth map such that for every $m\in M$ the map $T_{0_m} \Phi \colon T_{0_m}E \rightarrow T_{\Phi(0_m)}M \times_q M$ is an isomorphism of Banach spaces. Then there is an open neighborhood of the zero section $O\subseteq \Omega$ such that $\Phi|_O$ is an open embedding.
\end{Lemma}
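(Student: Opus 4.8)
The plan is to reduce the statement to two independent facts: that $\Phi$ is a local diffeomorphism near the zero section, and that it can be made injective there. The first is routine from the inverse function theorem, and the genuine content lies in the second, which is a patching problem solved using paracompactness of $M$.

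First I would shrink $\Omega$. Since the set of invertible bounded operators is open and $v\mapsto T_v\Phi$ is continuous in bundle charts, the hypothesis that $T_{0_m}\Phi$ is an isomorphism for every $m$ shows that $O_1:=\{v\in\Omega \mid T_v\Phi \text{ is an isomorphism}\}$ is an open neighbourhood of the zero section. By the Banach inverse function theorem, $\Phi|_{O_1}$ is then a local diffeomorphism; in particular it is an open map and a local homeomorphism, so it only remains to find a neighbourhood $O\subseteq O_1$ of the zero section on which $\Phi$ is injective. On such an $O$, the map $\Phi|_O$ is an injective local diffeomorphism, hence an open embedding.

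The decisive structural observation is that $\mathrm{pr}_1\circ\Phi=\pi$, so that $\Phi(v)=\Phi(w)$ forces $\pi(v)=\pi(w)$: all collisions occur inside a single fibre $E_m$, and injectivity of $\Phi$ on a set is equivalent to injectivity of each fibre map $\phi|_{E_m\cap O_1}\colon E_m\cap O_1\rightarrow q^{-1}(q(m))$. Applying the inverse function theorem at a single $0_{m_0}$ and choosing a local trivialisation $E|_{U}\cong U\times F$, I obtain a product neighbourhood $U'\times B(0,\varepsilon)$ of $0_{m_0}$ on which $\Phi$ is injective; fibrewise this says $\phi|_{E_m}$ is injective on the $\varepsilon$-ball for every $m\in U'$. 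Thus there is an open cover $\{U_\alpha\}$ of $M$, trivialisations over each $U_\alpha$, and radii $\varepsilon_\alpha>0$ such that $\phi|_{E_m}$ is injective on the $\varepsilon_\alpha$-ball $B_\alpha$ (measured in the $\alpha$-trivialisation) for all $m\in U_\alpha$.

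The main obstacle is globalising these fibrewise injectivity radii into a single open neighbourhood, since the balls $B_\alpha$ are measured in different, incompatible trivialisations over overlapping charts, and the naive ``intersect all relevant balls'' condition fails to be open at points where $\pi(v)$ crosses $\partial U_\alpha$ (a new, possibly large, ball constraint switches on). Here paracompactness is essential. I would pass to a locally finite refinement, take a shrinking $\{V_\alpha\}$ with $\overline{V_\alpha}\subseteq U_\alpha$ still covering $M$, and for each $\alpha$ form the open set $R_\alpha:=\{v\mid \|v\|_\alpha<\varepsilon_\alpha\}\cup\bigl(E\setminus\pi^{-1}(\overline{V_\alpha})\bigr)$, whose point is that membership in $R_\alpha$ forces $v\in B_\alpha$ as soon as $\pi(v)\in\overline{V_\alpha}$, while the buffer between $\overline{V_\alpha}$ and $\partial U_\alpha$ keeps $R_\alpha$ open. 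Setting $O:=O_1\cap\bigcap_\alpha R_\alpha$, local finiteness makes this a locally finite intersection, hence open, and it contains the zero section. Finally, given $v,w\in O$ with $\Phi(v)=\Phi(w)$ I get $\pi(v)=\pi(w)=m$, pick $\alpha$ with $m\in V_\alpha$, conclude $v,w\in B_\alpha\cap E_m$ from $R_\alpha$, and invoke fibrewise injectivity on $B_\alpha$ to obtain $v=w$. This yields injectivity of $\Phi|_O$, and together with the local diffeomorphism property completes the proof.
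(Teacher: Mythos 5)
Your proof is correct, and it shares the paper's overall skeleton: the inverse function theorem produces product-shaped neighbourhoods $\kappa_\alpha^{-1}(U_\alpha \times B_\alpha)$ of the zero section on which $\Phi$ is an embedding, the identity $\mathrm{pr}_1\circ \Phi = \pi$ reduces injectivity to single fibres, and paracompactness patches these local neighbourhoods into a global one. The patching device itself, however, is genuinely different, and the difference matters. The paper fixes a locally finite refinement $\{V_i\}_{i\in I}$ of the cover and, for each $m \in M$, intersects the finitely many relevant model neighbourhoods inside the typical fibre, $O'_m := \bigcap_{i\in J_m} O_i \subseteq F$, forming $Q_m := \pi^{-1}(V'_m)\cap \bigcup_{i\in J_m}\kappa_i^{-1}(V_i\times O'_m)$ and then $Q := \bigcup_m Q_m$; its injectivity argument rests on the property (step (iii) there) that the fibre of $Q_m$ over a point $m_0\in V_i$ lies in the single set $U_i$, which amounts to containments $\kappa_j^{-1}(\{m_0\}\times O'_m)\subseteq \kappa_i^{-1}(\{m_0\}\times O_i)$ for $j\neq i$, i.e., to comparing neighbourhoods transported through two different trivialisations; this is the delicate point of that proof, since the transition isomorphisms $\kappa_i\circ\kappa_j^{-1}$ need not map $O'_m$ into $O_i$, so as literally stated this step needs further justification. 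Your construction --- a shrinking $\overline{V_\alpha}\subseteq U_\alpha$ together with the buffered open sets $R_\alpha = \kappa_\alpha^{-1}(U_\alpha\times B_\alpha)\cup\bigl(E\setminus \pi^{-1}(\overline{V_\alpha})\bigr)$, intersected using local finiteness --- sidesteps this entirely: when two points collide over $m$, you pick one $\alpha$ with $m\in V_\alpha$, and both points are forced into the ball of that single trivialisation, where injectivity is already known, so no two trivialisations are ever compared. What your route buys is precisely this robustness (it cleanly resolves the openness-versus-containment tension you identified); what it costs is an appeal to the shrinking lemma (available, since $M$ is paracompact and Hausdorff, hence normal) and a choice of fibre norms, neither of which is an issue in the Banach setting.
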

\begin{proof}
As $T_{0_m}\Phi$ is an isomorphism of Banach spaces, the inverse function theorem implies that for every $m \in M$ there is an open neighbourhood $0_m \in U_{m} \subseteq E$ on which $\Phi$ is an open embedding.
By projecting down, we obtain an open cover of $M$:
$V_{m} := \pi(U_{m})$, $m \in M$.
Without loss of generality, we may assume that there exists $\kappa_m \colon \pi^{-1}(V_{m}) \rightarrow V_m \times F$ a bundle trivialisation.
Since $\kappa_{m}(U_{m})$ is an open neighbourhood of $(m,0) \in V_{m} \times F$, we can shrink~$V_{m}$ to find an open neighbourhood $0 \in O_{m}$ such that $V_{m} \times O_{m} \subseteq \kappa_{m}(U_{m})$.
If we replace $U_{m}$ by~$\kappa_{m}^{-1}(V_{m} \times O_{m})$ we obtain the following diagram
\begin{equation*}
 \begin{tikzcd}
 0_{V_{m}} \arrow[r, phantom, sloped, "\subseteq"] \ar[d,"\kappa_{m}", "\cong"'] & U_{m} \arrow[r, phantom, sloped, "\subseteq"] \ar[d,"\kappa_{m}", "\cong"'] & \pi^{-1}(V_{m}) \ar[d,"\kappa_{m}", "\cong"'] \\
 V_{m} \times \{ 0 \} \arrow[r, phantom, sloped, "\subseteq"] & V_{m} \times O_{m} \arrow[r, phantom, sloped, "\subseteq"] & V_{m} \times F.
 \end{tikzcd}
\end{equation*}
Repeating for every $m \in M$, we obtain an open cover $\{ V_{m} \}_{m \in M}$ and by paracompactness of $M$, there is a locally finite refinement of this cover, denote this cover by $\{V_i\}_{i\in I}$.

Now, let $m \in M$ be arbitrary.
That $\{V_i\}_{i\in I}$ is locally finite means that there exists an open neighbourhood of $m$, say $V'_{m} \subseteq M$, which has non-empty intersection with only finitely many of the sets $V_{i}$ for $i \in I$.
Denote by $J_{m}\subset I$ the corresponding finite subset.
I.e.,~$J_{m}$ is defined to be the subset of $I$ such that, if $i \in I$, then
$V_{i} \cap V'_{m} \neq \emptyset \Longleftrightarrow i \in J_{m}.$
Define the open subsets
$
 O'_{m} := \cap_{i \in J_{m}} O_{i} \subseteq F,
 $ and
\begin{equation*}
 Q_{m} := \bigcup_{i \in J_{m}} \kappa_{i}^{-1}\bigl(\bigl(V_{i}\cap V_{m}'\bigr) \times O_{m}'\bigr) = \pi^{-1}\bigl(V_{m}'\bigr) \cap \biggl(\bigcup_{i \in J_{m}} \kappa_{i}^{-1}\bigl(V_{i} \times O_{m}'\bigr)\biggr).
\end{equation*}
We observe that $Q_{m}$ has the following properties:
\begin{itemize}\itemsep=0pt
 \item[(i)] $0_{m} \in Q_{m}$, because $m$ must be contained in at least one of the sets $V_{i} \cap V_{m}'$ for $i \in J_{m}$, and~$0 \in O_{m}'$,
 \item[(ii)] $\pi(Q_{m}) = V_{m}'$,
 \item[(iii)] let $m_{0} \in V_{m}'$ be arbitrary, if $m_{0} \in V_{i}$, then
 \begin{equation*}
 \mathcal{V}_{m_{0}} \cap Q_{m} = \mathcal{V}_{m_{0}} \cap \biggl( \bigcup_{i \in J_{m}} \kappa_{i}^{-1}\bigl(V_{i} \times O'_{m}\bigr)\biggr) \subseteq U_{i}.
 \end{equation*}
\end{itemize}
By (i), it follows that
$
 Q := \bigcup_{m \in M} Q_{m}
$ is an open neighbourhood of $0 \subseteq E$.
We claim that $\Phi$ restricts to an open embedding on $Q$.
Indeed, as $\Phi$ is already locally an open embedding on~$Q$, it suffices to prove that $\Phi$ is injective on $Q$.
So, let $x,y \in Q$ and assume that $\Phi(x) = \Phi(y)$.
It~follows from $\Phi=(\pi,\phi)$ that $\pi(x) = \pi(y) = m_{0}$, and thus $x,y \in \pi^{-1}(m_{0})$.
We have $x \in Q_{m_{x}}$ and $y \in Q_{m_{y}}$ for some $m_{x},m_{y} \in M$.
And thus, by~(ii) we have $m_{0} \in V_{m_{x}}' \cap V_{m_{y}}'$.
By (iii), we have that $x,y \in U_{i}$, whenever $m_{0} \in V_{i}$.
But we already knew that $\Phi$ is an embedding on $U_i$, and therefore injective and thus $x=y$.
\end{proof}

As a first application of the lemma we record the following well known fact.

\begin{Corollary}\label{cor:exp_locadd}
Every $C^\infty$-paracompact Banach manifold $M$ admits a local addition.
\end{Corollary}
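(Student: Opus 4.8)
The plan is to take the spray exponential map of the tangent bundle, viewed as an anchored bundle with the identity anchor, as the candidate local addition, and then to cut down its domain using Lemma~\ref{lem:loc2glob_emb}. Concretely, I regard $(\pi_M, TM, \id_{TM})$ as an anchored Banach bundle over $M$. Since $M$ is $C^\infty$-paracompact, Lemma~\ref{lem:ex_spray_anchor} provides a spray $S$ on this bundle, with smooth exponential map $\exp_S \colon \Omega \rightarrow M$ defined on an open neighbourhood $\Omega \subseteq TM$ of the zero section. I set $\Sigma := \exp_S$, so that $\theta_\Sigma = (\pi_M, \exp_S)$, and the task reduces to exhibiting an open neighbourhood $U \subseteq \Omega$ of the zero section on which $\theta_\Sigma$ is a diffeomorphism onto an open subset of $M \times M$, and on which $\Sigma$ restricts to the identity on $M$.

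The identity-on-the-zero-section property is immediate: by condition~(\hyperlink{sprayS3}{S3}) the spray $S$ vanishes on the zero section, so the integral curve through $0_m$ is constant, whence $\Fl_1^S(0_m) = 0_m$ and $\exp_S(0_m) = \pi_M(0_m) = m$. Thus, under the identification of $M$ with the zero section, $\Sigma|_M = \id_M$. To produce the domain $U$, I would apply Lemma~\ref{lem:loc2glob_emb} to the bundle $\pi_M \colon TM \rightarrow M$ together with the trivial submersion $q \colon M \rightarrow \{\ast\}$ onto the one-point Banach manifold, for which the fibre product $M \times_q M$ is simply $M \times M$; the relevant map is then $\Phi = (\pi_M, \exp_S) = \theta_\Sigma$. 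Since $C^\infty$-paracompactness entails paracompactness of $M$, the only hypothesis left to verify is that $T_{0_m}\theta_\Sigma$ is a Banach space isomorphism for every $m \in M$.

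This verification is the key computation. Using the canonical splitting $T_{0_m}TM \cong T_m M \oplus T_m M$ (horizontal $\oplus$ vertical) valid along the zero section of a vector bundle, I split $T_{0_m}\theta_\Sigma = (T_{0_m}\pi_M, T_{0_m}\exp_S)$ into components. The derivative $T_{0_m}\pi_M$ is the projection onto the horizontal summand. For $T_{0_m}\exp_S$: on the horizontal summand it equals $\id_{T_m M}$, because $\exp_S$ restricted to the zero section is $\id_M$ by the previous paragraph; and on the vertical summand it equals the anchor $\rho = \id_{T_m M}$ by Lemma~\ref{lem:spec_curv}(b). Hence, in block form with respect to $T_m M \oplus T_m M$ in both source and target, $T_{0_m}\theta_\Sigma = \left(\begin{smallmatrix} \id & 0 \\ \id & \id \end{smallmatrix}\right)$, a unipotent lower-triangular map and therefore a Banach space isomorphism.

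With this in hand, Lemma~\ref{lem:loc2glob_emb} furnishes an open neighbourhood $U \subseteq \Omega$ of the zero section on which $\theta_\Sigma$ is an open embedding, i.e.\ a diffeomorphism onto an open subset of $M \times M$; combined with $\Sigma|_M = \id_M$, the pair $(U, \Sigma)$ is a local addition. I expect the only genuinely delicate step to be the identification of $T_{0_m}\exp_S$ on the vertical directions with the anchor, which is precisely Lemma~\ref{lem:spec_curv}(b); the horizontal contribution and the injectivity-via-paracompactness argument are handled, respectively, by the constancy of the flow on the zero section and by Lemma~\ref{lem:loc2glob_emb}, so the remainder is bookkeeping with the zero-section splitting.
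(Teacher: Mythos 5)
Your proposal is correct and follows essentially the same route as the paper: a spray $S$ on the trivial anchored bundle $(\pi_M,TM,\id_{TM})$ via Lemma~\ref{lem:ex_spray_anchor}, identification of the fibre derivative of $\exp_S$ with the (identity) anchor via Lemma~\ref{lem:spec_curv}\,(b), and the reduction $M\times M = M\times_{\star}M$ so that Lemma~\ref{lem:loc2glob_emb} upgrades local invertibility of $(\pi_M,\exp_S)$ to an embedding near the zero section. The only cosmetic difference is that you compute the lower-left block of $T_{0_m}(\pi_M,\exp_S)$ explicitly as $\id$, whereas the paper leaves it as an unspecified entry $\star$, which suffices since a lower-triangular block operator with invertible diagonal is invertible regardless.
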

\begin{proof}\hspace{-1pt}As $M$ is $C^\infty$-paracompact, the trivial anchored bundle $(\pi_M,TM, \id_{TM})$ admits a~spray~$S$ by Lemma~\ref{lem:ex_spray_anchor}. The spray exponential is smooth and satisfies $\exp_S (0_m)=m$. Computing for $m\in M$ in charts (which we suppress), we can identify $T_{0_x} (\pi_M,\exp_S)$ with the block operator \smash{$\bigl[\begin{smallmatrix} \id & 0 \\ \star & T_{0_x} (\exp_S|_{T_mM})\end{smallmatrix}\bigr]$}. This operator is invertible as Lemma~\ref{lem:spec_curv}\,(b) implies that $T_{0_x} (\exp_S|_{T_mM})$ is the restriction of the anchor of the trivial anchored bundle~$TM$, i.e., the identity. As $M\times M = M\times_{\star} M$, where $M\rightarrow \{\star\}$ is the submersion onto the one-point manifold, we see that $(\pi_M,\exp_S)$ induces an embedding on some open neighborhood of the zero section by Lemma~\ref{lem:loc2glob_emb}. This finishes the proof.
\end{proof}

\begin{Lemma}\label{lem:con_loc_add_subm}
Let $p\colon M \rightarrow N$ be a surjective submersion between Banach manifolds.
\begin{enumerate}\itemsep=0pt
\item[\textup{(a)}] Let $S$ be a spray on $\cV$.
Then, for every $x \in N$, there exists a unique spray ${S_{x}\colon\! TM_{x}\! \rightarrow\! T^{2}M_{x}}$ on the submanifold $M_x := p^{-1}(x)$ such that $S$ and $S_x$ are $T\iota_x$-related, where $\iota_x \colon M_x \rightarrow M$ is the embedding.
\item[\textup{(b)}] If $M$ is $C^\infty$-paracompact then there exists a local addition on $p$.
\end{enumerate}
\end{Lemma}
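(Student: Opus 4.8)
The plan is to derive both parts from the single observation that a spray on $\cV$ keeps its integral curves inside the fibres of $p$; this rests on the anchored-path property of Lemma~\ref{lem:spec_curv}\,(a). First I would record the identification $\cV|_{M_x}=TM_x$: since $\cV_m=\ker T_mp=T_mM_x$ for $m\in M_x$, the embedding $T\iota_x\colon TM_x\rightarrow TM$ identifies $TM_x$ with the restriction of $\cV$ to $M_x$, viewed as a submanifold of the total space $\cV$.

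For part (a), the key step is to show that $S$ is tangent to this submanifold $TM_x\subseteq\cV$. Let $v\in\cV$ with $\pi_{\cV}(v)\in M_x$ and set $c(t)=\Fl^S_t(v)$. By Lemma~\ref{lem:spec_curv}\,(a) this integral curve is an anchored path, so $\frac{\dd}{\dd t}\pi_{\cV}(c(t))=\rho(c(t))=c(t)\in\cV=\ker Tp$, whence $\frac{\dd}{\dd t}p\bigl(\pi_{\cV}(c(t))\bigr)=Tp\circ\rho(c(t))=0$. Thus the base path stays in $M_x$ and $c(t)\in\cV_{\pi_{\cV}(c(t))}=TM_x$ for all $t$, so $S$ is tangent to $TM_x$. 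This tangency lets me define $S_x:=\bigl(T(T\iota_x)\bigr)^{-1}\circ S\circ T\iota_x\colon TM_x\rightarrow T^2M_x$ as the unique vector field $T\iota_x$-related to $S$ (the standard restriction of a vector field to an invariant submanifold). Transporting the spray axioms along the embedding then shows $S_x$ is a spray: (\hyperlink{sprayS1}{S1}) is automatic, (\hyperlink{sprayS2}{S2}) follows because the anchor $\rho$ of $\cV$ restricts on $\cV|_{M_x}=TM_x$ to $\id_{TM_x}$, the anchor of the trivial anchored bundle on $M_x$, and (\hyperlink{sprayS3}{S3}) follows since $T\iota_x$ intertwines the fibrewise scalar multiplications $h_\lambda$. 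Uniqueness is forced by the relatedness equation $T(T\iota_x)\circ S_x=S\circ T\iota_x$.

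For part (b), I would fix a spray $S$ on the anchored bundle $\cV$, which exists by Example~\ref{ex:subbun_anchor} together with Lemma~\ref{lem:ex_spray_anchor}, using that $M$ is $C^\infty$-paracompact. On the domain $\Omega$ of its exponential I set $\Psi(v):=\bigl(\pi_{\cV}(v),\exp_S(v)\bigr)$. The fibre-preservation established above gives $p(\exp_S(v))=p(\pi_{\cV}(v))$, so $\Psi$ takes values in $M\times_p M$; moreover $\exp_S(0_m)=m$ yields $\Psi\circ 0=\Delta$ and $\mathrm{pr}_1\circ\Psi=\pi_{\cV}$, so the diagram \eqref{diag:loc_add_on_sub} commutes.

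The remaining, and I expect hardest, step is to upgrade $\Psi$ to an open embedding, where the spray geometry enters decisively. Writing $T_{0_m}\cV\cong T_mM\oplus\cV_m$ with $X\in T_mM$ and $\xi\in\cV_m$, the derivative of $\pi_{\cV}$ is the projection $(X,\xi)\mapsto X$, while by Lemma~\ref{lem:spec_curv}\,(b) the fibre derivative of $\exp_S$ at $0_m$ is the anchor, i.e.\ the inclusion $\cV_m\hookrightarrow T_mM$, and its zero-section direction contributes the identity; hence $T_{0_m}\Psi(X,\xi)=(X,\,X+\xi)$. A direct check shows this is a Banach-space isomorphism onto $T_{(m,m)}(M\times_p M)=\{(Y_1,Y_2)\mid T_mp\,Y_1=T_mp\,Y_2\}$: injectivity is immediate, and for surjectivity, given $(Y_1,Y_2)$ one puts $X=Y_1$ and $\xi=Y_2-Y_1\in\ker T_mp=\cV_m$. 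With the hypotheses of Lemma~\ref{lem:loc2glob_emb} thus verified for $E=\cV$, the submersion $q=p$, and $\Phi=\Psi$, that lemma promotes the pointwise isomorphisms to an open embedding on some neighbourhood $U$ of the zero section, and $(U,\Psi|_U)$ is the desired local addition on $p$.
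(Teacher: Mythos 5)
Your proposal is correct and follows essentially the same route as the paper's proof: restrict the spray to the fibres $M_x$, form $\Psi=(\pi_{\cV},\exp_S)$, compute $T_{0_m}\Psi$ at the zero section using Lemma~\ref{lem:spec_curv}\,(b), and invoke Lemma~\ref{lem:loc2glob_emb} to get an open embedding near the zero section. The only presentational difference is in part~(a), where you obtain tangency of $S$ to $\cV|_{M_x}$ from invariance of integral curves (via anchored paths), while the paper argues infinitesimally from $T\pi_{\cV}\circ S=\rho$; both arguments rest on the same axiom (S2).
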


\begin{proof} We view $(\pi_{\cV},\cV,\rho)$ as an anchored bundle by considering the bundle inclusion ${\rho \colon\! \cV \!\rightarrow\! TM}$ as anchor. Since $p$ is a submersion, $M_x= p^{-1}(x)$ is an embedded submanifold of $M$ such that fibre-wise $T_m M_x = \cV_m$ for each $m \in M_x$. We let $\iota_x \colon M_x \rightarrow M$ be the canonical embedding and observe that $T\iota_x$ factors through the anchor $\rho$, so we abuse notation and view $T\iota_x$ as a~map~to~$\cV$.

(a) Let $S$ be a spray on $(\pi_\cV, \cV, \rho)$. Since $T\pi_\cV \circ S = \rho$ is the bundle inclusion and fibre-wise $T\cV_m = T(T_m M_x)$ for $m \in M_x$, we see that $S$ factors through the image of $T^2 \iota_x$.
As $\iota_x$ is an~embedding, so is $T^2\iota_x$ hence there is a unique smooth vector field
\begin{displaymath}S_x \colon\ T M_x \rightarrow T^2M_x\end{displaymath}
to which $S$ is $T\iota_x$ related. Copying the proof of Lemma~\ref{lem:anchoredIso}, $S_x$ is also a spray.

(b) As $M$ is $C^\infty$-paracompact, there exists a spray $S_\cV$ on the anchored bundle $(\pi_\cV,\cV,\iota_{\cV})$ by Lemma~\ref{lem:ex_spray_anchor}.
The exponential map $\exp_{S_\cV}$ of this spray then yields a local addition on $p$ by restricting the map $\Psi := (\pi_{\cV}, \exp_{S_\cV})$ to a suitable open subset $U \subseteq \mathcal{V}$. To prove this, we let~${U_{0} \subseteq \mathcal{V}}$ be the domain of definition of $\exp_{S_{\cV}}$.
Note first that $\Psi \colon U_{0} \rightarrow M \times_p M$ makes sense as a~map to $M \times_p M$: By part~(a), the spray $S_\cV$ induces a spray $S_x$ on every fibre $M_x$ which is related to $S_\cV$ via the (tangent of the) inclusion of the submanifold. In particular, integral curves of the induced spray stay on~$M_x$ and coincide by relatedness of the vector fields to integral curves of $S_\cV$. Since $\exp_{S_\cV}$ follows these integral curves, $\Psi= (\pi_{\cV},\exp_{S_\cV})$ is a smooth map to $M\times_p M$.

We conclude from the construction of $\Psi$ that it fits into the diagram \eqref{diag:loc_add_on_sub} and makes it commutative. By construction, $\Psi$~is defined on an open neighborhood of the zero section and smooth. Indeed the flow on the zero section is stationary, while the right-hand triangle in the diagram commutes trivially for~$\Psi$.

It remains to be shown that $\Psi$ restricts to an open embedding on some neighborhood of the $0$-section. For this let $m \in M$ be arbitrary and $W \subseteq M$ be a sufficiently small open neighbourhood of $m$, so that there exist trivializations $TW \cong W \times E$ and $\cV|_{W} \cong W \times F$. Here $E \cong H\times F$ as~$F$ is a complemented subspace of~$E$. This induces a trivialization $T\cV|_{W} \cong \cV|_{W} \times E \times F$. To~compute $T_{0_m}\Psi$, we identify $T(M\times_p M)\cong TM\times_{Tp} TM\subseteq TM \times TM$ via Lemma~\ref{lem:tangent_fibreprod}. Relative to these trivializations, we obtain
\begin{align}
 T_{0_{m}}\Psi\colon\ E \times F = H\times F \times F &\rightarrow T_{(m,m)}(M \times_{p} M) \subseteq T_{m}M \times T_{m}M \cong (H\times F)^2, \notag \\
 (x,v,w) &\mapsto ((x,v), T_{0_{m}}\exp_{S_{\cV}}((x,v),0) + (0,w)),\label{eq:TG2_deriv}
\end{align}
where we used linearity to split $T_{0_m}\exp$ and identified the fibre derivative as the bundle inclusion of the vertical bundle via Lemma~\ref{lem:spec_curv}. Using that the right-hand side of \eqref{eq:TG2_deriv} is contained in~${T(M\times_p M)=TM \times_{Tp} TM}$, we can invert $T_{0_m}\Psi$ via
\begin{align*}&\Phi\colon\ (H \times F)\times_{T_mp} (H\times F)\rightarrow (H\times F) \times \{0\} \times F \cong E \times F,\\
&\Phi ((x,v),(x,w)):= \bigl((x,v),(x,w)-T_{0_m}\exp_{S_\cV}((x,v),0)\bigr).
\end{align*}
We conclude that $T_{0_m}\Psi$ is invertible for every $m \in M$ and since $M$ is $C^\infty$-paracompact, Lemma~\ref{lem:loc2glob_emb} shows that we can shrink the domain of $\Psi$ to obtain an open neighborhood of the zero section on which $\Psi$ is an embedding.
\end{proof}

Finally, we can lift a spray to a spray on a horizontal bundle for a given submersion.

\begin{Lemma}\label{lem:spray_lifting}
Let $S$ be a spray on a Banach manifold $N$ and $p \colon M \rightarrow N$ a surjective submersion. Assume that $\cH$ is the horizontal bundle given by an Ehresmann connection. For the anchored bundle $(\pi_H,\cH, \rho)$ with $\rho \colon \cH \rightarrow TM$ being the inclusion, there exists a spray $S_\cH$ which is $Tp|_{\cH}$-related to the spray $S$.
As a consequence on their domain of definitions the following relation holds
\begin{displaymath} p\circ \exp_{S_\cH} = \exp_S \circ Tp|_{\cH}.\end{displaymath}
\end{Lemma}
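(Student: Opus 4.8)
The plan is to define $S_\cH$ fibre by fibre through a universal property forced by requirement \hyperlink{sprayS2}{(S2)} together with $Tp|_\cH$-relatedness, and to show that these two conditions already pin down a unique, automatically smooth, spray; no partition of unity is then needed. Write $q := Tp|_\cH \colon \cH \rightarrow TN$, and let $\pi_\cH \colon \cH \rightarrow M$ and $\pi_N \colon TN \rightarrow N$ be the bundle projections, so that $\pi_N \circ q = p \circ \pi_\cH$. Since $\cH$ complements $\cV = \ker Tp$, the restriction $q$ is fibrewise a linear isomorphism $\cH_m \xrightarrow{\cong} T_{p(m)}N$ covering $p$.

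First I would isolate the linear-algebraic core. For $v \in \cH_m$ I consider the map $(T_v\pi_\cH, T_v q) \colon T_v\cH \rightarrow T_mM \times T_{q(v)}TN$. It is injective: if $\xi$ lies in its kernel, then $T\pi_\cH(\xi)=0$ forces $\xi$ to be $\pi_\cH$-vertical, i.e.\ $\xi \in \cH_m = \ker T\pi_\cH$, and on vertical vectors $Tq$ restricts to the fibrewise isomorphism $q|_{\cH_m}$, so $Tq(\xi)=0$ gives $\xi=0$. In fact it is an isomorphism onto the fibre product $P := \bigl\{(\zeta,\omega)\mid T_mp\,\zeta = T\pi_N\,\omega\bigr\}$: it lands in $P$ because $T_mp\circ T\pi_\cH = T\pi_N \circ Tq$, and it is onto $P$ since $q|_{\cH_m}$ identifies $\ker T\pi_\cH$ with the vertical space $\ker T\pi_N$ while both sides project isomorphically onto $T_mM$ on complements (a short check in a submersion chart). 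Finally the pair $(\rho(v), S(q(v)))$ lies in $P$: indeed $\rho(v)=v$, $T_mp(v)=q(v)$, and $T\pi_N(S(q(v)))=q(v)$ by the defining property $T\pi_N\circ S = \id_{TN}$ of the spray $S$. Hence there is a unique $S_\cH(v) \in T_v\cH$ with
\begin{equation*}
T\pi_\cH(S_\cH(v)) = \rho(v) \qquad\text{and}\qquad Tq(S_\cH(v)) = S(q(v)).
\end{equation*}

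Next I would verify that $v \mapsto S_\cH(v)$ is a smooth spray. Smoothness is local: in a submersion chart $\cH$ trivialises, $q$ becomes a linear projection, and the two defining equations solve explicitly to a map of the standard spray form $(x,v)\mapsto(\rho(x,v),v,\gamma(x)(v,v))$, with $\gamma$ the local spray coefficient of $S$; this is visibly smooth and displays \hyperlink{sprayS1}{(S1)}, \hyperlink{sprayS2}{(S2)} and the homogeneity \hyperlink{sprayS3}{(S3)} simultaneously. Conditions \hyperlink{sprayS1}{(S1)} and \hyperlink{sprayS2}{(S2)} also follow directly from the characterisation (the former since $S_\cH$ is a section of $T\cH \rightarrow \cH$, the latter by the first defining equation), and \hyperlink{sprayS3}{(S3)} may alternatively be obtained from uniqueness by checking that $Th_\lambda\circ(\lambda S_\cH)$ satisfies both defining equations at $h_\lambda(v)$, exactly as in Lemma~\ref{lem:anchoredIso}. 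The second defining equation is precisely $Tq \circ S_\cH = S \circ q$, i.e.\ $S_\cH$ is $Tp|_\cH$-related to $S$.

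Finally I would deduce the exponential identity from relatedness of the flows. Setting $c(t) := q(\Fl^{S_\cH}_t(v))$ and differentiating gives $\dot c(t) = Tq\bigl(S_\cH(\Fl^{S_\cH}_t(v))\bigr) = S(c(t))$ with $c(0)=q(v)$, so by uniqueness of integral curves $q \circ \Fl^{S_\cH}_t = \Fl^S_t \circ q$ wherever defined. Therefore
\begin{equation*}
p \circ \exp_{S_\cH} = p \circ \pi_\cH \circ \Fl^{S_\cH}_1 = \pi_N \circ q \circ \Fl^{S_\cH}_1 = \pi_N \circ \Fl^S_1 \circ q = \exp_S \circ Tp|_\cH .
\end{equation*}
The main obstacle is the unique-existence step: one must see that \hyperlink{sprayS2}{(S2)} together with relatedness overdetermine nothing yet leave no freedom, which hinges on $(T\pi_\cH, Tq)$ being a fibrewise isomorphism onto $P$ and on $(\rho(v),S(q(v)))$ being compatible. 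Once this is in place, global smoothness comes for free and \hyperlink{sprayS3}{(S3)} is only a routine homogeneity check.
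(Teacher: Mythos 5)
Your proof is correct, and it constructs literally the same spray as the paper, but it is organised around a different pivot. The paper's proof goes through the pullback bundle: it identifies $T(p^{\ast}TN)$ with the fibre product of $TM$ and $T^{2}N$ over $Tp$ and $T\pi_{N}$ (Lemma~\ref{lem:tangent_fibreprod}), writes down the explicit lift $\hat{S}(m,v) = (\sigma_{\cH}(m,v), S(v))$ as a spray on the anchored bundle $(p^{\ast}\pi_{N}, p^{\ast}TN, \sigma_{\cH})$, transfers it to $\cH$ along the vertical isomorphism $(\pi_{M},{\rm d}p)|_{\cH}$ with inverse $\sigma_{\cH}$ via Lemma~\ref{lem:anchoredIso}, and then must verify $Tp|_{\cH}$-relatedness by a chain-rule computation. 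You instead characterise $S_{\cH}(v)$ pointwise as the unique solution of $T\pi_{\cH}(S_{\cH}(v)) = \rho(v)$ and $Tq(S_{\cH}(v)) = S(q(v))$, so relatedness holds by construction, and the work is shifted into showing that $(T_{v}\pi_{\cH}, T_{v}q)$ is an isomorphism onto your fibre product $P$, plus a separate local smoothness check. These are two packagings of one and the same identification: your map $(T_{v}\pi_{\cH},T_{v}q)$ is the tangent at $v$ of the paper's bundle isomorphism $(\pi_{M},{\rm d}p)|_{\cH}\colon \cH \rightarrow p^{\ast}TN$ composed with the identification of Lemma~\ref{lem:tangent_fibreprod}, and your $P$ is exactly the fibre of $T(p^{\ast}TN)$; hence your $S_{\cH}$ coincides with the paper's. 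The trade-off is this: the paper gets smoothness and the spray axioms for free from the general transfer lemma at the cost of the relatedness computation, while you get relatedness for free at the cost of the existence/uniqueness argument (where in the Banach setting you should either invoke the open mapping theorem or, as you do, read off the continuous inverse from the chart computation) and the local verification of (S1)--(S3). Your injectivity and surjectivity arguments, the chart formula, the uniqueness argument for (S3), and the derivation of the exponential identity from relatedness of flows (which is the same as the paper's final step) are all sound.
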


\begin{proof}
Following standard notation conventions, consider the following commutative diagram:
\begin{displaymath}
\begin{tikzcd}
 T(p^{\ast} TN) \arrow[r, "\pi_{p^\ast TN}"] \arrow[d, "T(\pi_N^\ast p)"] & p^\ast TN \arrow[r,"p^\ast \pi_N"] \arrow[d,"\pi_N^\ast p"] & M \arrow[d,"p"] \\ T^2 N \arrow[r,"\pi_{TN}"] & TN \arrow[r,"\pi_N"] & \,N.
\end{tikzcd}
\end{displaymath}
We let $\pi_N^\ast p \colon p^\ast TN \rightarrow TN$ be the bundle morphism induced by the pullback bundle.

We now claim that there exists a vector field $\hat{S}\colon p^{*}TN \rightarrow T(p^{*}TN)$, which is $\pi^{*}_{N}p$-related to $S\colon TN \rightarrow T^{2}N$.
We identify via Lemma~\ref{lem:tangent_fibreprod}
\begin{align*}
 & p^{*}TN= M \, {}_{p}\hspace{-.5em}\times_{\pi_{N}} TN = \{ (m,v) \in M \times TN \mid \pi_{N}(v) = p(m)\} \qquad \text{and} \\
 & T(p^{*}TN)= TM\, {}_{Tp}\hspace{-.5em}\times_{T\pi_{N}} T^{2}N = \bigl\{ (a,b) \in TM \times T^{2}N \mid Tp(a) = T\pi_{N}(b) \bigr\}.
\end{align*}
Let $\sigma_{\mathcal{H}}\colon p^{*}TN \rightarrow TM$ be section of $(\pi_{M}, {\rm d}p)$ associated to $\mathcal{H}$.
We define
\begin{equation*}
 \hat{S}\colon\ p^{*}TN \rightarrow T(p^{*}TN), \qquad (m,v) \mapsto (\sigma_{\mathcal{H}}(m,v), S(v)).
\end{equation*}
Because $S$ is a spray and $\sigma_{\mathcal{H}}$ is a section of $(\pi_{M},{\rm d}p)$, we have for $(m,v) \in M \, {}_{p}\hspace{-.25em}\times_{\pi_{N}} TN$,
\begin{align*}
 T\pi_{N}(S(v)) = v, \qquad
 Tp(\sigma_{\mathcal{H}}(m,v)) = v,
\end{align*}
respectively.
As a consequence, $\hat{S}$ is a spray on the anchored bundle $(p^\ast\pi_N, p^\ast TN, \sigma_{\cH})$.
Now, we recall that the corestriction of $\sigma_\cH \colon p^\ast TN \rightarrow TM$ to $\mathcal{H} \subseteq TM$ is a diffeomorphism with inverse $(\pi_M,{\rm d}p)|_\cH$.
We thus transfer the vector field $\hat{S}$ to a vector field $S_{\mathcal{H}}\colon \mathcal{H} \rightarrow T\mathcal{H}$, explicitly given~by\looseness=-1
\begin{displaymath}
 S_\cH \colon \ \cH \rightarrow T\cH,\qquad S_\cH = T\sigma_\cH \circ \hat{S} \circ (\pi_M, {\rm d}p)|_{\cH}.\end{displaymath}
Applying Lemma~\ref{lem:anchoredIso}, $S_{\cH}$ is a spray on the anchored bundle $(\pi_{\cH}, \cH, \iota_\cH)$, where $\iota_\cH \colon \cH \rightarrow TM$ is the subbundle inclusion.
Note now that on $\cH$ we have $Tp|_\cH = (\pi_N^\ast p) \circ (\pi_M,{\rm d}p)|_\cH$ and thus by the chain rule
\begin{align*}
(T(Tp|\cH) \circ S_\cH &= T(Tp \circ \sigma_\cH) \circ p^\ast S\circ (\pi_M, {\rm d}p)|_{\cH} = T(\pi_N^\ast p) \circ p^\ast S \circ (\pi_M, {\rm d}p)|_{\cH} \\
&= S \circ (\pi_N^\ast p) \circ (\pi_M, {\rm d}p)|_{\cH} = S \circ Tp|_{\cH}.
\end{align*}
So $S$ is $(Tp|_{\cH})$-related to the vector field $S_\cH$.

The final statement on the exponential now follows directly from the fact that the exponential is constructed as the projection of the flow of vector fields (at time $1$). By relatedness of the vector fields, integral curves of $S_\cH$ get mapped by $Tp|_{\cH}$ to integral curves of $S$, hence the relation for the exponentials holds.
\end{proof}

\section{Proof of the Stacey--Roberts lemma}

In this section, we establish the main theorem, namely that the pushforward by a submersion becomes a submersion between the manifolds of mappings.

%\begin{setup}[Plan of proof for Theorem~\ref{TheoremA}]\label{plan_of_attack}
{\bf Plan of proof for Theorem~\ref{TheoremA}.}\label{plan_of_attack}
Let $p \colon M \rightarrow N$ be a surjective submersion between $C^\infty$-paracompact Banach manifolds. For any $\sigma$-compact manifold $X$, we would like to prove that $p_\ast \colon C^\infty (X,M) \rightarrow C^\infty (X,N)$ is a submersion, i.e., that there are submersion charts for $p_\ast$.
The~manifold structure on $C^\infty (X,M)$ and $C^\infty (X,N)$ is determined by an atlas of canonical charts constructed from local additions on $M$ and $N$.

{\bf Idea.} \emph{Construct $\Sigma_M$, $\Sigma_N$ such that the canonical charts of the manifolds of mappings become submersion charts for the pushforward by the submersion.}

To follow the general idea, we need to pick a horizontal complement to the vertical bundle and local additions $\Sigma_M$, $\Sigma_N$ such that the following diagram commutes:
\begin{equation}\label{diag:SR-Lemma}
\begin{tikzcd}
 TM=\mathcal{V} \oplus \mathcal{H} \arrow[d, ,"0\oplus Tp|_{\mathcal{H}}"] & \arrow[l,hook'] \Omega_M \arrow[rr,"\Sigma_M"] \arrow[d,"Tp|_{\Omega_M}"] & &M \arrow[d,"p"]\\ TN & \arrow[l, hook'] \Omega_N \arrow[rr,"\Sigma_N"] & &\,N.
\end{tikzcd}
\end{equation}
For these local additions, the canonical charts of the manifold of mappings split as submersion charts.
We have already prepared all the ingredients required for the construction of the elements of \eqref{diag:SR-Lemma}, indeed we proceed in steps:
\begin{enumerate}\itemsep=0pt
\item Pick an Ehresmann connection $\cH$ for $p$, Lemma~\ref{lem:ex_Ehresmann}.
\item Pick a spray $S_N$ on $N$ whose exponential $\exp_{S_N}$ yields the local addition $\Sigma_N$, Corollary~\ref{cor:exp_locadd}. Denote the domain of $\Sigma_N$ by $\Omega_N$.
\item Construct from $S_N$ a $Tp$ related spray $S_\cH$ on the horizontal bundle $\cH$, Lemma~\ref{lem:spray_lifting}. Then the exponentials satisfy $p \circ \exp_{S_\cH} = \exp_{S_N} \circ Tp|_\cH = \Sigma_N \circ Tp|_{\cH}.$
\item Pick a local addition $\Psi=(\pi_{\cV},\exp_{S_{\cV}})$ on the submersion $p$, Lemma~\ref{lem:con_loc_add_subm}\,(b).
\item Pick a linear connection, Remark~\ref{rem:ex_lincon}, on $\cV$ and denote its parallel transport (cf.\ page~\pageref{setup:parallel_tp}) along a smooth curve $\gamma \colon [0,1] \rightarrow M$ by $P^\gamma_{0,1}\colon \cV_{\gamma(0)} \rightarrow \cV_{\gamma(1)}$.
\item Construct $\Sigma_M$ by the following recipe. In Proposition~\ref{prop:custom_sigma} below, we construct an open neighborhood of the zero section $\Sigma_M \subseteq TM$ such that the following construction is well defined for $(v,h) \in \Omega_M \subseteq \cV \oplus \cH$:
\begin{enumerate}\itemsep=0pt
\item parallel transport $P_{0,1}^{c_h}(v)$, $v\in \cV$ along $c_h(t)=\exp_{S_\cH}(th)$, where $h \in \cH$,
\item then apply $\exp_{S_p}$ to the resulting vertical vector.
\end{enumerate}
\end{enumerate}
%\end{setup}

What remains to be shown is that the map $\Sigma_{M}$ constructed in~(6) actually yields a local addition (Proposition~\ref{prop:custom_sigma}), and that the charts obtained by using $\Sigma_{N}$ and $\Sigma_{M}$ are submersion charts.
For this, we establish smoothness of the parallel transport.

\begin{Lemma}\label{lem:smooth_aux}
In the situation of Step~{\rm 5}, there exists an open, fibre-wise convex neighborhood ${U \subseteq TM}$ of the zero section, such that map
\begin{displaymath}
\rho \colon \ \cV \oplus \cH \supseteq U \rightarrow TM, \qquad \rho (v,h) := P_{0,1}^{c_h(t)}(v) \qquad \text{with } c_h (t) = \exp_{S_{\cH}} (th)
\end{displaymath}
makes sense and is smooth.
\end{Lemma}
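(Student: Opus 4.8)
The plan is to recognise $\rho$ as the time-one evaluation of the solution of a linear ordinary differential equation whose coefficients and inhomogeneity depend smoothly on the parameter $h$, and then to invoke the theorem on smooth dependence of ODE solutions on initial conditions and parameters in Banach spaces (e.g.\ \cite[Chapter~IV]{La01}). Since smoothness is a local property and we only need a neighbourhood of the zero section, it suffices to produce, for each $m_{0} \in M$, an open neighbourhood of $0_{m_{0}}$ in $\cV \oplus \cH$ on which $\rho$ is defined and smooth; the set $U$ is then obtained as the union of these. Fix $m_{0}$ and choose an open $W \ni m_{0}$ over which $\cV$ and $\cH$ trivialise and on which the coefficient $B_{W}$ of the chosen linear connection on $\cV$ (as in Definition~\ref{defn:connect_lin}) is defined and smooth.

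First I would analyse the curve $c_{h}(t) = \exp_{S_{\cH}}(th)$. Using the flow property \eqref{flow:prop} one has $c_{h}(t) = \pi_{\cH}\bigl(\Fl^{S_{\cH}}_{t}(h)\bigr)$, so $c_{h}$ is the base projection of the spray flow; consequently, by (S2) together with Lemma~\ref{lem:spec_curv}(a), its velocity is the flow curve itself, $\dot c_{h}(t) = \iota_{\cH}\bigl(\Fl^{S_{\cH}}_{t}(h)\bigr)$. In particular $(t,h) \mapsto \bigl(c_{h}(t), \dot c_{h}(t)\bigr)$ is smooth on its domain of definition, and $c_{0} \equiv m_{0}$. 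By continuity of the flow and compactness of $[0,1]$, there is an open neighbourhood $0_{m_{0}} \in \Omega_{m_{0}} \subseteq \cH$ such that $c_{h}$ is defined up to time one and $c_{h}([0,1]) \subseteq W$ for all $h \in \Omega_{m_{0}}$.

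For such $h$, the parallel transport $\eta_{h}$ along $c_{h}$ with initial value $\eta_{h}(0) = v \in \cV_{m_{0}}$ is, read in the trivialisation, the unique solution of the linear equation \eqref{eq:PT_diffeo}, i.e.
\[
 \dot\eta(t) = -\,B_{W}\bigl(c_{h}(t)\bigr)\bigl(\dot c_{h}(t), \eta(t)\bigr),
\]
and $\rho(v,h) = P^{c_{h}}_{0,1}(v) = \eta_{h}(1)$. The right-hand side is a composite of the smooth maps $B_{W}$, $c_{h}$ and $\dot c_{h}$, hence jointly smooth in $(t,\eta,h)$, and it is linear in $\eta$; linearity guarantees that the solution exists on all of $[0,1]$ for every initial value $v$, so the $v$-direction may be taken to be the full fibre, yielding the fibre-wise convexity of $U$. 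The theorem on smooth dependence of solutions on initial conditions and parameters then shows that $(v,h) \mapsto \eta_{h}(1)$ is smooth, which is precisely smoothness of $\rho$ near $0_{m_{0}}$.

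The main obstacle is organisational rather than computational: one must arrange that $c_{h}$ stays inside a single trivialising chart, so that the parallel transport equation \eqref{eq:PT_diffeo} can be written and solved globally in $t \in [0,1]$, which is exactly what the shrinking of $h$ and the compactness of $[0,1]$ accomplish; and one must verify that the hypotheses of the Banach-space version of the smooth-parameter-dependence theorem are satisfied, the essential point being that the right-hand side displayed above is genuinely jointly smooth in $(t,\eta,h)$ and linear in the solution variable.
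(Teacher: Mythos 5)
Your proof is correct in its essentials and rests on the same two pillars as the paper's argument: reading the parallel transport in a bundle trivialisation as the linear ODE \eqref{eq:PT_diffeo}, with the smooth coefficient data $(c_h,\dot c_h)$ supplied by the spray flow via \eqref{flow:prop} and Lemma~\ref{lem:spec_curv}, and then citing smooth dependence of solutions on parameters and initial values from \cite[Chapter~IV]{La01}. The difference is organisational. The paper first fixes a fibre-wise convex zero-section neighbourhood $U$ with $U\cap\cH$ inside the domain of $\exp_{S_\cH}$ and proves smoothness at \emph{every} $h\in U\cap\cH$; since for such $h$ the curve $c_h([0,1])$ may cross several charts, it dissects $[0,1]$ into finitely many subintervals, proves smoothness of the two-parameter transports $P^{c_h}_{r,s}$ (this is why it proves ``slightly more''), and composes them. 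You instead shrink the $\cH$-neighbourhood of each $0_{m_0}$ until $c_h([0,1])$ lies in a single trivialising chart, which avoids the dissection-and-composition step entirely; this yields a possibly smaller $U$, but the lemma only asks for \emph{some} neighbourhood, so this is a legitimate simplification. One loose end: your parenthetical justification of fibre-wise convexity does not quite follow from what you proved. Taking full fibres in the $\cV$-direction is fine, but the $\cH$-part of the fibre of $U=\bigcup_{m_0}\mathrm{pr}_{\cH}^{-1}(\Omega_{m_0})$ over a point $m$ is the union $\bigcup_{m_0}\bigl(\Omega_{m_0}\cap\cH_m\bigr)$, and a union of open sets (even convex ones) containing $0_m$ need not be convex. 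This is harmless: $\rho$ stays smooth on any open subset of your $U$, so you may shrink $U$ to a fibre-wise convex open zero-section neighbourhood; the existence of such neighbourhoods over the $C^\infty$-paracompact base is the same standard fact the paper invokes when it ``picks'' its fibre-wise convex $U$ at the outset.
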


\begin{proof}
 We shall prove slightly more and show that $P_{r,s}^{c_h}$ is smooth in all its arguments.
On its domain $D\subseteq \cH$, the exponential map $\exp_{S_{\cH}}$ is smooth. Now pick an open zero section neighborhood $U\subseteq TM$ fibre-wise convex such that $U \cap \cH \subseteq D$. Note that with this choice of~$U$ the map $\rho$ makes sense.

As smoothness is a local property, we pick an open neighborhood $U_h$ of $h \in \cH \cap U$ and dissect the compact set $[0,1]$ into finitely many subintervals, such that for all $x\in U_h$ and all~${t \in J}$ in a subinterval, the curves $\exp_{S_\cH}(tx)$ are contained in a single manifold chart. Shrinking~${U_h}$, we~may assume that it is the domain of a bundle trivialisation. Thus we can without loss of generality work in charts and bundle trivialisations which we suppress in the notation. Denote by $B_V \colon V \rightarrow \text{Bil}(E , F; F)$ the smooth local map associated to the linear connection, Definition~\ref{defn:connect_lin}, where we assume that $\exp_\cH (tx)\in V$ for all $(x,t) \in U_h\times J$. By construction, the map
\begin{displaymath}F \colon\ U_h \times J \times F \rightarrow F, \qquad F(x,t,v) =- B_V\bigl(\exp_{S_{\cV}}(tx)\bigr)\biggl(\frac{{\rm d}}{{\rm d}t}\exp_{S_{\cV}}(tx),v\biggr)
\end{displaymath}
is smooth. Now parallel transport is the solution to the differential equation \eqref{eq:PT_diffeo}, which in our case is equivalent to
\begin{displaymath}
 \dot{\eta}(t) = F(h,t,\eta(t)), \qquad \eta(0)=v.
\end{displaymath}
Thus the assertion follows from smooth parameter and initial value dependence of differential equations on (open subsets) of Banach spaces, see \cite[Chapter~IV, Section~1]{La01}.
\end{proof}

\begin{Proposition}\label{prop:custom_sigma}
We use notation as in %??~\ref{plan_of_attack}
Steps~$1$--$5$~and use the splitting $TM=\cV \oplus \cH$ to split $x_m=(v_m,h_m) \in T_m M$ into a vertical and a horizontal component.
There exists an open neighborhood $\Omega_M \subseteq TM$ of the $0$-section such that the map
\begin{align}\label{evil_loc_add}
\Sigma_M\colon \ TM\supseteq \Omega_M \rightarrow M,\qquad \Sigma_{M} (v,h) := \exp_{S_{\cV}}\bigl(P_{0,1}^{c_h(t)}(v)\bigr) \qquad \text{with } c_h(t)=\exp_{\cH}(th)\!\!
\end{align}
defines a local addition which makes \eqref{diag:SR-Lemma} commutative.
\end{Proposition}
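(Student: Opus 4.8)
The plan is to verify the two defining properties of a local addition — that $\Sigma_M$ restricts to the identity on the zero section and that $\theta_{\Sigma_M}(x) := (\pi_M(x),\Sigma_M(x))$ is a diffeomorphism onto an open subset of $M\times M$ — and then to check commutativity of \eqref{diag:SR-Lemma}. Smoothness of $\Sigma_M$ is immediate: by Lemma~\ref{lem:smooth_aux} the map $(v,h)\mapsto P_{0,1}^{c_h}(v)$ is smooth on a fibre-wise convex zero-section neighbourhood $U$, and $\exp_{S_\cV}$ is smooth, so $\Sigma_M=\exp_{S_\cV}\circ\rho$ is smooth; the final domain $\Omega_M$ will be obtained by shrinking $U$ below. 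On the zero section, $v=0$ and $h=0$ force $c_0$ to be the constant curve at $m$, so $P_{0,1}^{c_0}(0)=0_m$ and $\Sigma_M(0_m)=\exp_{S_\cV}(0_m)=m$; thus $\Sigma_M|_M=\id_M$.

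For the diffeomorphism property I would follow the strategy of Corollary~\ref{cor:exp_locadd}: writing $M\times M=M\times_\star M$ for the submersion $M\to\{\star\}$, it suffices by Lemma~\ref{lem:loc2glob_emb} (applicable since $M$ is paracompact) to show that $T_{0_m}\theta_{\Sigma_M}$ is a Banach-space isomorphism for every $m$; the lemma then produces $\Omega_M$ on which $\theta_{\Sigma_M}$ is an open embedding. Working in charts and trivialisations of $\cV\oplus\cH$, identify $T_{0_m}(TM)\cong T_mM\oplus T_mM$ as base $\oplus$ fibre directions and use $T_mM=\cV_m\oplus\cH_m$. The base part of $T_{0_m}\Sigma_M$ is the identity because $\Sigma_M|_M=\id_M$. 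For the fibre derivative in a direction $(v_0,h_0)\in\cV_m\oplus\cH_m$, differentiate $s\mapsto\Sigma_M(sv_0,sh_0)=\exp_{S_\cV}\bigl(P_{0,1}^{c_{sh_0}}(sv_0)\bigr)$ at $s=0$: linearity of parallel transport gives $P_{0,1}^{c_{sh_0}}(sv_0)=s\,u(s)$ with $u(0)=v_0$ (transport along the constant curve $c_0$ is the identity) and base point $\exp_\cH(sh_0)$, so the velocity of this curve in $\cV$ at $s=0$ has base part $h_0$ (by Lemma~\ref{lem:spec_curv}(b) for $\exp_\cH$) and fibre part $v_0$. Applying $T_{0_m}\exp_{S_\cV}$, whose base part is the identity and whose fibre part is the anchor inclusion $\cV_m\hookrightarrow T_mM$ by Lemma~\ref{lem:spec_curv}(b), yields $h_0+v_0$. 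Hence the fibre part of $T_{0_m}\Sigma_M$ is the identity of $T_mM$, and relative to $T_mM\oplus T_mM$ the operator $T_{0_m}\theta_{\Sigma_M}$ has the lower-triangular block form $\bigl[\begin{smallmatrix}\id&0\\\id&\id\end{smallmatrix}\bigr]$, which is invertible.

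The crux — and the step most prone to error — is this fibre-derivative computation. A naive reading suggests the fibre derivative records only the vertical component $v_0$, but the base point of the transported vector is displaced to $\exp_\cH(sh_0)$, and it is precisely the $h_0$-contribution arising from evaluating $\exp_{S_\cV}$ at this moving base point that promotes $T_{0_m}\Sigma_M$ to the full identity rather than the non-invertible projection onto $\cV_m$. I would therefore carry the base point explicitly throughout and treat the two applications of Lemma~\ref{lem:spec_curv}(b) (for $\exp_\cH$ and for $\exp_{S_\cV}$) with care.

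Finally, for commutativity of \eqref{diag:SR-Lemma} the left square holds by definition of $\cV=\ker Tp$, since $Tp|_{\Omega_M}(v,h)=Tp(v)+Tp|_\cH(h)=Tp|_\cH(h)$. For the right square I must show $p\circ\Sigma_M=\Sigma_N\circ Tp|_\cH$, that is, $p\bigl(\exp_{S_\cV}(P_{0,1}^{c_h}(v))\bigr)=\exp_{S_N}(Tp|_\cH(h))$. Since $P_{0,1}^{c_h}(v)$ is vertical and based at $c_h(1)=\exp_\cH(h)$, and since by Lemma~\ref{lem:con_loc_add_subm}(a) the spray $S_\cV$ restricts to sprays on the fibres $M_x=p^{-1}(x)$ so that its integral curves stay within a single fibre, the point $\exp_{S_\cV}(P_{0,1}^{c_h}(v))$ lies in the fibre of its base point; hence $p\bigl(\exp_{S_\cV}(P_{0,1}^{c_h}(v))\bigr)=p(\exp_\cH(h))$. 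The relation $p\circ\exp_{S_\cH}=\exp_{S_N}\circ Tp|_\cH$ from Lemma~\ref{lem:spray_lifting} then gives $p(\exp_\cH(h))=\exp_{S_N}(Tp|_\cH(h))$, closing the square and completing the proof.
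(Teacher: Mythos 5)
Your proposal is correct and follows essentially the same route as the paper: smoothness via Lemma~\ref{lem:smooth_aux}, the identity $\Sigma_M(0_m)=m$, invertibility of $T_{0_m}(\pi_M,\Sigma_M)$ computed with Lemma~\ref{lem:spec_curv}(b) applied to both $\exp_{S_\cH}$ and $\exp_{S_\cV}$, the passage from pointwise invertibility to an embedding via Lemma~\ref{lem:loc2glob_emb} with $M\times M=M\times_\star M$, and commutativity of \eqref{diag:SR-Lemma} from Lemma~\ref{lem:spray_lifting} together with the fact that integral curves of $S_\cV$ stay in the fibres of $p$ (which is exactly how the paper's reference to \eqref{diag:loc_add_on_sub} is justified). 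The only detail you leave implicit is the precise choice of domain --- the paper takes $\Omega_M=(0\oplus Tp|_{\cH})^{-1}(\Omega_N)\cap\rho^{-1}(O)$ so that $\exp_{S_\cV}\circ\rho$ and $\Sigma_N\circ Tp|_{\cH}$ are defined before any shrinking --- which is minor; your single ray computation of the fibre derivative, making the block form $\bigl[\begin{smallmatrix}\id&0\\ \id&\id\end{smallmatrix}\bigr]$ explicit, is just a condensed version of the paper's Steps~3(a), 3(b) and~4.
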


\begin{proof}
We define $\Sigma_M := (0 \oplus Tp|_{\cH})^{-1}(\Sigma_N) \cap \rho^{-1} (O)$, where $\rho$ is the map from Lemma~\ref{lem:smooth_aux} and~${O \subseteq \cV}$ the domain of $\exp_{S_p}$. Then $\Omega_M$ is an open zero-section neighborhood and by definition~$\Sigma_M$ makes sense on $\Omega_M$. Further, the map $\Sigma_M$ is smooth as a composition of smooth mappings using Lemma~\ref{lem:smooth_aux}. Note that $\Sigma_M (0_m) = m$ for every $m \in M$ as $\exp_{S_\cH} (0_m)=m$, parallel transport along the constant path is the identity and \smash{$\Psi=\bigl(\pi_{\cV}, \exp_{S_\cV}\bigr)$} is a local addition~on~$p$.

{\it Step $1$:} \emph{$\Sigma_M$ makes \eqref{diag:SR-Lemma} commutative.} We insert the formula~\eqref{evil_loc_add} for $\Sigma_M$ to obtain
\begin{align*}
p\circ \Sigma_M (v_x,h_x)&= p\circ \Psi \bigl(P_{0,1}^{c_{h_x}(t)}(v_x)\bigr) \stackrel{\eqref{diag:loc_add_on_sub}}{=} p \bigl(\exp_{S_\cH} (h_x)\bigr)\stackrel{\text{Lemma}~\ref{lem:spray_lifting}}{=} \exp_{S_N} \circ Tp|_\cH (h_x)\\
& = \exp_{S_N} \circ (0\oplus Tp|_\cH)(v_x,h_x) = \Sigma_N\circ (0\oplus Tp|_\cH)(v_x,h_x).
\end{align*}
We will now compute the tangent map of $(\pi_M,\Sigma_M)$ at the zero section to be able to apply the open mapping theorem and obtain a diffeomorphism onto the a diagonal neighborhood~in~${M\times M}$.

{\it Step $2$:} \emph{Splitting and localisation.}
We exploit the splitting $TM = \cV \oplus \cH$ to compute the~derivative. Assume that $\alpha \colon {}]{-}\varepsilon,\varepsilon[ \rightarrow TM$ is smooth with $\alpha(0)=0_m$. Split $\alpha(t)=\alpha_\cV(t) + \alpha_\cH(t)$,~as
\begin{displaymath}
 \alpha_\cH := \sigma_\cH \circ (\pi,{\rm d}p) \circ \alpha, \qquad \alpha_\cV := \alpha - \alpha_\cH,
\end{displaymath}
where $\sigma_\cH$ is the section of $(\pi,{\rm d}p)$ whose image is the horizontal subbundle. In particular, $\alpha_\cH$ takes its image in $\cH$ while $\alpha_\cV$ takes its image in $\cV$. Further, we set $\alpha_M := \pi_{M} \circ \alpha$ for the base curve of $\alpha$ in $M$. The derivative will be computed locally in the next step.

For $m \in M$ fixed, pick a chart domain $m\in U$ and bundle trivialisations for $\cV|_U\cong U \times F$ and $\cH|_U \cong U \times H$, where $E=F\oplus H$ for the fibres of the subbundles and $E$ the modelling space of~$M$. In the following, we shall suppress the trivialisations in the formulae, but note that
\begin{displaymath}
 T_{0_m}TM|_U \cong E \times E, \qquad \text{and}\qquad T_m M \times T_m M\cong E \times E.
\end{displaymath}
With respect to this identification, we rewrite $T_{0_x} (\pi_M,\Sigma_M) \colon T_{0_m} TM \rightarrow T_m M \times T_{m} M$ as a block matrix of linear operators,
\begin{align}\label{eq:blockoperator}
\underbrace{E \times E}_{\cong T_{0_m}TM|_U} \rightarrow \underbrace{E\times E}_{\cong T_mM\times T_mM}, \qquad T_{0_m} (\pi,\Sigma_M) = \biggl[\begin{NiceTabular}{c|c}
$\id_E$ & $0$ \\ \hline
$\star$ & $T_{0_m} \Sigma_M (0,\cdot)$
\end{NiceTabular}\biggr].
\end{align}
Here $T_{0_m} \Sigma_M (0,\cdot)$ coincides with the derivative in the fibre $T_m M$ of $\Sigma_m|_{T_mM }$. We shall now compute these fibre derivatives.

{\it Step $3\,(a)$:} \emph{Fiber derivative of $\Sigma_M$ on the subspace $\cV_m$.} Since $\alpha_\cV$ takes its image in the vertical bundle, we note that \smash{$\exp_{S_\cH} \bigl(c_{\alpha_\cV(t)}(s)\bigr)= \exp_{S_\cH}\bigl(s0_{\alpha_M (t)}\bigr) =\pi_{\cV}(s\alpha_{\cV}(t))=\alpha_M(t)$} is constant in~$s$. Thus parallel transport along this path reduces to the identity (as the path is constant):
\begin{align*}
T_{0_m} (\pi_M,\Sigma_M)\bigl(\alpha_\cV'(0)\bigr) &= \frac{{\rm d}}{{\rm d}t}\biggr|_{t=0} (\pi_M,\Sigma_M)(\alpha_\cV (t)) = \frac{{\rm d}}{{\rm d}t}\biggr|_{t=0} (\alpha_M (t), \exp_{S_{\cV}} (\alpha_\cV (t))) \\
&= \frac{{\rm d}}{{\rm d}t}\biggr|_{t=0} \Psi (\alpha_{\cV} (t)) =T_{0_m}\Psi (\alpha_{\cV}'(0)).
\end{align*}
The map $\Psi = (\pi_{\cV},\exp_{S_{\cV}})$ is an embedding from an open neighborhood of the zero section in $\cV$ into $M \times_p M$ and by construction $T_{0_m} \exp_{S_{\cV}}|$ takes its image in $\cV_m$. Since we are after the fibre derivative of $\Sigma_M$, we can specialise to a vertical curve $\alpha_{\cV}(t)=tv$ for some $v\in \cV_m$. Identifying the fibre derivative of the second component of $\Psi$ via \eqref{eq:TG2_deriv}, we see that
\[T_{0_m}\Sigma_M|_{T_m M \cap \cV} (v)= T_{0_m}\exp_{S_{\cV}}^m (v)=v,\qquad v \in \cV_m.\]

{\it Step $3\,(b)$:} \emph{Fiber derivative of $\Sigma_M$ on the subspace $\cH_m$.} Consider the curve $\alpha_\cH$ from Step~2 which takes only values in the horizontal bundle. Hence, we find for $\Sigma_M$ that
\begin{displaymath}\Sigma_M (\alpha_{\cH}(t)) =\exp_{S_{\cV}}\bigl(P_{0,1}^{\exp_{S_\cH}(s\alpha_{\cH}(t))}(0)\bigr)=\exp_{S_\cH}(\alpha_\cH(t)).
\end{displaymath}
Here the last equality follows from the fact that $\exp_{S_{\cV}}$ is a local addition on $p$ which maps \smash{$0\raisebox{2pt}{${}_{\exp_{S_{\cH}}(\alpha_\cH(t))}$}$} to the base point. Since $\exp_{S_\cH}$ is the exponential of a spray on the anchored bundle~$\cH$, we find with Lemma~\ref{lem:spec_curv} that the fibre derivative takes its image in $\cH$ and computes as~$T_{0_x} \exp_{S_{\cH}}(0,h)=h$, using that the anchor of $\cH$ is the inclusion.

{\it Step $4$:} \emph{$T_0\Sigma_M(0,\cdot)$ is invertible.} From Step 3\,(a) and~(b), we obtain by linearity from the splitting $T_m M = (T_m M \cap \cV) \times (T_mM \cap \cH)$ an identification of $T_{0_x}\Sigma_M (0,\cdot )$ with the block operator
$\bigl[\begin{smallmatrix} \id_{T_m M \cap \cV} & 0 \\ 0 & \id_{T_mM \cap \cH}\end{smallmatrix}\bigr]$, i.e., $T_{0_x}\Sigma_M (0,\cdot ) = \id_{T_m M}$. Inserting into \eqref{eq:blockoperator}, $T_{0_m} (\pi_M,\Sigma_M)$ is invertible for every $m \in M$. Now $M$ is $C^\infty$-paracompact, and $M \times M = M \times_\star M$, where the fibre product is taken with respect to the submersion $M \rightarrow \{\star\}$ onto the one-point manifold. By Lemma~\ref{lem:loc2glob_emb}, we can thus shrink $\Omega_M$ to a smaller zero section neighborhood on which~$(\pi_M,\Sigma_M)$ restricts to an embedding.
\end{proof}

We can now prove Theorem~\ref{TheoremA} using the proof strategy of the original Stacey--Roberts lemma \cite[Lemma~2.4]{AaS19}. The argument is as follows.

\begin{proof}[Proof of Theorem~\ref{TheoremA}]
We already know that the pushforward $p_\ast$ is smooth by Proposition~\ref{prop:smooth_pf}.

Thus fix $f \in C^\infty (X,M)$ and construct submersion charts around $f$ and $p\circ f \in C^\infty (X,N)$. Use Proposition~\ref{prop:custom_sigma} to obtain local additions $\Sigma_M$ on $M$ and $\Sigma_N$ on $N$ such that \eqref{diag:SR-Lemma} commutes. Denote by $\cV$ the vertical bundle induced by $p$ and recall that in the construction a horizontal bundle $\cH$ was chosen such that $TM = \mathcal{V} \oplus \mathcal{H}$. Let $\iota_\cB \colon \cB \rightarrow TM$ and $\text{proj}_{\cB} \colon TM \rightarrow \cB$ be the bundle inclusion, and projection, respectively, of the subbundle $\cB \in \{\cV, \cH\}$.
With respect to these choices, we then construct the canonical charts \eqref{eq:can_charts} of the manifolds of mappings
\begin{displaymath}
 \phi_f^{-1} \colon\ O_f' \rightarrow O_f \subseteq \Gamma_c (f^{\ast}TM) \qquad\text{and}\qquad \phi_{p_\ast (f)}^{-1}\colon\ O_{p_\ast (f)} \rightarrow O_{p_\ast (f)}\subseteq \Gamma_c ((p\circ f)^\ast TN).
\end{displaymath}
Further, we identify now $\Gamma_c (f^\ast TM)=\Gamma_c (f^\ast (\cV\oplus \cH))$ as follows: There exists a natural bundle isomorphism $\kappa =(\kappa_\cV, \kappa_\cH) \colon f^\ast (\cV \oplus \cH) \rightarrow f^\ast \cV \oplus f^\ast \cH$. Applying Gl\"{o}ckner's $\Omega$-lemma \mbox{\cite[Theorem~F.23]{glo04}} (which is applicable since the pullback bundles are over the $\sigma$-compact base manifold~$X$, see \cite[Lemma F.19\,(c)]{glo04}), we obtain a vector space isomorphism
\begin{align}\label{eq:hor_vert_split}
((f^\ast \text{proj}_\cV)_\ast, (f^\ast \text{proj}_\cH)_\ast) \circ \kappa_\ast \colon\ \Gamma_c (f^\ast (\cV\oplus \cH))\rightarrow \Gamma_c(f^\ast \cV)\times \Gamma_c(f^\ast \cH).
\end{align}
(whose inverse is given by $ (X,Y) \mapsto \kappa^{-1}_\ast ((f^\ast \iota_\cV)_\ast (X) + (f^\ast \iota_{\cH})_\ast (Y))$ hence it is smooth by another application of Gl\"{o}ckner's $\Omega$-lemma).
By the universal property of the pullback, we obtain a~bundle morphism over the identity
\begin{displaymath}
 \Theta \colon\ f^\ast TM \rightarrow (p_\ast(f))^\ast TN,\qquad \Theta (v)=(\pi_M,{\rm d}p)(v).
\end{displaymath}
On the subbundle $\iota_{\cH}\colon f^\ast \rightarrow f^\ast TM$, the map $\Theta$ restricts to a bundle isomorphism $B\colon f^\ast \cH \rightarrow (p_\ast (f))^\ast TN$. Define now $I_f := \iota_{\cH} \circ B^{-1}$. Using the formula for the parametrisations~$\phi_f$,~$\phi_{p\circ f}$ from \eqref{eq:can_charts}, we can combine \eqref{diag:SR-Lemma}, \eqref{eq:hor_vert_split} to obtain the following commutative diagram:
\begin{displaymath}
\begin{tikzcd}
\Gamma_c (f^\ast (\cH \cap \Omega_M)) \arrow[r,"(\iota_{\cH})_\ast"] & \Gamma_c (f^\ast (\Omega_M)) \arrow[r, "\phi_f"] \arrow[d,"{(\pi_M ,{\rm d}p)_\ast}"] & C^\infty (X,M) \arrow[d, "p_\ast"] \\
\Gamma_c ((p_\ast(f))^\ast \Omega_N) \arrow[u,"B^{-1}_\ast"] \arrow[r,equal] \arrow[ru,"(I_f)_\ast"]& \Gamma_c (p \circ f)^\ast \Omega_N) \arrow[r,"\phi_{p_\ast (f)}"] &
C^\infty (X,N).
\end{tikzcd}
\end{displaymath}
Hence in these canonical charts the map $p_\ast$ satisfies
\begin{displaymath}
\phi^{-1}_{p_\ast(f)} \circ p_\ast \circ \phi_f = ((\pi_M,{\rm d}p)|_{\Omega_M})_\ast.
\end{displaymath}
We note that $(\pi_M,{\rm d}p)_\ast$ is continuous linear with continuous linear right inverse $(I_f)_\ast$ on the spaces of sections. In other words, the canonical charts are submersion charts in the sense of Definition~\ref{defn:submersion}, showing that $p_\ast$ is a submersion.
\end{proof}

\begin{Remark} \quad
\begin{enumerate}\itemsep=0pt
\item As already remarked in \cite{AaS19} for the finite-dimensional case, even though $p$ is a~surjective submersion, $p_\ast$ is not necessarily surjective. For an example, recall from \cite{dH16} that the Ehresmann connection giving the horizontal bundle $\cH$ need not be complete. Hence for~${X=\R}$ not every smooth curve can be globally lifted to $M$ for such a connection.

\item The manifold $X$ did not play a direct role in the proof, so the proof remains valid for manifolds with boundary or corners (or even more generally for manifolds with rough boundary, cf.\ \cite[\S2.3]{GaS22} for the definition and further explanations). However, several results we cited from the literature would have to be reproved in this more general setting and we refrain from going into the added technical details here.
\end{enumerate}
\end{Remark}

\subsection*{Applications of the generalised Stacey--Roberts lemma}

There are several constructions in the literature where the use of the original Stacey--Roberts lemma restricted the construction to manifolds of mappings with finite-dimensional target. The new version allows one to generalise these results now with the same proof to Banach manifolds. We mention explicitly that
\begin{itemize}\itemsep=0pt
\item The construction of current groupoids $C^\infty (X, \cG)$ from \cite[Theorem~3.2]{AaGaS20} can be carried out now for Banach Lie groupoids, removing the restriction to finite-dimensional target Lie groupoids.
\item The bisection group of a Banach Lie groupoid $\cG =(G\rightrightarrows M)$ over a finite-dimensional base manifold $M$ always is a submanifold of $C^\infty (M,G)$ with the proof as in \cite[Theorem~1.3]{AaS19}.
\end{itemize}
As both direct applications of the Stacey--Roberts lemma involve Banach Lie groupoids we would like to mention that relevant examples have recently been investigated, see \cite{BaGaP19,BanGpd23,BanGpd24,HaJ16} and the references therein.
For the readers convenience we describe the construction of the submanifold structure in more details to showcase a typical application of the Stacey--Roberts lemma.

\textbf{Application example: Submanifolds of manifolds of mappings.}
Let us start with a~general result for a~surjective submersion where we recover a version of \cite[Proposition~10.10]{Mic80} for more general manifolds.

\begin{Lemma}\label{lem:sect_submanf} For a surjective submersion $p \colon M \rightarrow N$ from a $C^\infty$-paracompact Banach manifold~$M$ to a $\sigma$-compact manifold $N$, the space of submersion sections
 \begin{displaymath}
 \Gamma_p := \{\sigma \in C^\infty (N,M) \mid p\circ \sigma = \id_N\}
 \end{displaymath}
is a splitting submanifold.
\end{Lemma}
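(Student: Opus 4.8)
The plan is to exhibit $\Gamma_p$ as the fibre of the pushforward over a single point and then to read off a submanifold chart from the submersion charts produced in the proof of Theorem~\ref{TheoremA}. Observe first that the condition $p\circ\sigma=\id_N$ is exactly $p_\ast(\sigma)=\id_N$, so that $\Gamma_p = p_\ast^{-1}(\{\id_N\})$ is the preimage of the point $\id_N\in C^\infty(N,N)$. To invoke Theorem~\ref{TheoremA} I would take the mapping space with \emph{source} $N$ and \emph{targets} $M$ and $N$; the hypotheses hold because $N$ is $\sigma$-compact (hence finite-dimensional and in particular $C^\infty$-paracompact) and $M$ is $C^\infty$-paracompact Banach by assumption. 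Thus $p_\ast\colon C^\infty(N,M)\to C^\infty(N,N)$ is a submersion.

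Assume $\Gamma_p\neq\emptyset$ (otherwise the claim is vacuous) and fix $f\in\Gamma_p$. Around $f$ and $p_\ast(f)=\id_N$ I would use the canonical charts $\phi_f$, $\phi_{p_\ast(f)}$ associated to the local additions $\Sigma_M$, $\Sigma_N$ of Proposition~\ref{prop:custom_sigma}, together with the horizontal--vertical splitting \eqref{eq:hor_vert_split}. In these coordinates $p_\ast$ is represented by $((\pi_M,{\rm d}p)|_{\Omega_M})_\ast$, which under the identification $\Gamma_c(f^\ast(\cV\oplus\cH))\cong\Gamma_c(f^\ast\cV)\times\Gamma_c(f^\ast\cH)$ is the projection onto the second factor, post-composed with the bundle isomorphism $B_\ast$. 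Since $\phi_{p_\ast(f)}(0)=p\circ f=\id_N$, the base point $\id_N$ corresponds to the zero section, and hence $\phi_f^{-1}$ carries $\Gamma_p$ to $O_f\cap\Gamma_c(f^\ast\cV)$: the representing section must have vanishing horizontal component, i.e.\ take values in the vertical bundle.

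This is precisely a splitting submanifold chart, since the model space $\Gamma_c(f^\ast TM)$ splits as $\Gamma_c(f^\ast\cV)\oplus\Gamma_c(f^\ast\cH)$ by \eqref{eq:hor_vert_split}, with $\Gamma_p$ locally the closed complemented subspace $\Gamma_c(f^\ast\cV)$. As $f$ ranges over $\Gamma_p$ the resulting charts cover $\Gamma_p$, and their transition maps are restrictions of the ambient ones, hence smooth; so $\Gamma_p$ is a splitting submanifold modelled fibre-wise on the vertical sections. The step requiring the most care is the bookkeeping around Theorem~\ref{TheoremA} — keeping track of the reversed roles of source and target and of the two occurrences of $N$ — together with the verification that, in the charts, the equation $p_\ast(\sigma)=\id_N$ reduces exactly to the vanishing of the horizontal part of the section. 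Beyond that, the argument is the standard fact that the fibre of a map admitting submersion charts is a splitting submanifold, so no new analytic input is needed.
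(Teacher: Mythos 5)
Your proposal is correct and follows essentially the same route as the paper: identify $\Gamma_p = p_\ast^{-1}(\{\id_N\})$, apply Theorem~\ref{TheoremA} with source $X=N$ (noting $N$ is finite-dimensional and $C^\infty$-paracompact) to get that $p_\ast$ is a submersion, and conclude that the fibre is a splitting submanifold. The only difference is presentational: where you unwind the submersion charts from the proof of Theorem~\ref{TheoremA} by hand to exhibit $\Gamma_p$ locally as $O_f\cap\Gamma_c(f^\ast\cV)$, the paper simply cites the regular value theorem \cite[Theorem~D]{glo16}, which encapsulates exactly that chart argument.
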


\begin{proof}
As $p$ is a submersion, the Stacey--Roberts lemma implies that $p_\ast$ is a submersion. Hence $\Gamma_p = (p_\ast)^{-1}(\{\id_N\})$ is a splitting submanifold by the regular value theorem \cite[Theorem D]{glo16}.
\end{proof}

We now return to the bisections of a Banach Lie groupoid $\cG = (G \rightrightarrows M)$, where $G$ is $C^\infty$-paracompact over a $\sigma$-compact base $M$. Denote by $s,t \colon G \rightarrow M$ the source and target map of the groupoid which both are submersions. Then by definition, the bisection group is the set
\begin{displaymath}\Bis (\cG) := \{\sigma \in C^\infty (M,G) \mid s \circ \sigma = \id_M,\, t \circ \sigma \in \Diff (M)\}.\end{displaymath}
Now it is well known that the diffeomorphism group $\Diff (M) \subseteq C^\infty (M,M)$ is an open set, cf.~\cite{Mic80}. Hence $O:= (t_\ast)^{-1}(\Diff (M))$ is open in $C^\infty (M,G)$ by Proposition~\ref{prop:smooth_pf} and we deduce that $\Bis(\cG) = \Gamma_s \cap O$ is a split submanifold of $C^\infty (M,G)$ by Lemma~\ref{lem:sect_submanf}. Alternatively, one could also have noted that the restriction $(s_\ast)|_{O}$ is a submersion as a consequence of the Stacey--Roberts lemma for~$s_\ast$.

With a bit more work, the rest of the proof can be generalised to see that this structure turns the bisection group into a Lie group (the original proof uses results on composition which exist in the literature only for finite-dimensional targets but could be generalised).

The application of the Stacey--Roberts lemma showcased in this section is rather typical as it provides an out of the box argument as to why certain generic constructions (which are available in the infinite-dimensional setting for our notion of submersion, see \cite{glo16} for statements and proofs) on function spaces provide (sub)manifolds. Further examples along this line are provided by the Hom-stack constructions in \cite{RaV18,RaV18}. There a Hom-stack of maps from a manifold into a~Lie groupoid is constructed and shown to be represented by a Lie groupoid. To obtain the Lie groupoid, one however needs to resolved the target groupoid as a \v{C}ech groupoid and construct from the associated covering data a family of fibre-products of mapping spaces. Individually, these fibre-products become manifolds by the usual argument after applying the Stacey--Roberts lemma to the pushforwards used in the construction of the fibre-products.

\appendix

\section{Infinite-dimensional calculus and manifolds}\label{App:Bast}

In this appendix, we recall some basic definitions of infinite-dimensional calculus.
As stated in the beginning, we will mostly work on Banach manifolds, but the manifolds of mappings cannot be modelled on Banach spaces. Hence we need to base our article on a generalised calculus, the so called Bastiani calculus \cite{Bas64}, see, e.g., \cite{GaS22, Sch23} for introductory expositions.

\begin{Definition}%\label{defn-Ck}
Consider locally convex spaces $E$, $F$
and a map $f\colon U\rightarrow F$ on an open subset $U\subseteq E$.
Write
\[
D_yf(x):= {\rm d}f(x;y):= \frac{{\rm d}}{{\rm d}t}\biggr|_{t=0}f(x+ty)
\]
for the directional derivative of~$f$ at $x\in U$
in the direction $y\in E$, if it exists.
Let $k\in \N_0\cup\{\infty\}$.
If $f$ is continuous, and the iterated directional derivatives
\[
{\rm d}^jf(x;y_1,\dots, y_j):= (D_{y_j}\cdots D_{y_1}f)(x)
\]
exist for all $j\in\N_0$ such that
$j\leq k$, $x\in U$ and $y_1,\dots, y_j\in E$,
and the maps ${\rm d}^jf\colon U\times E^j\to F$
are continuous, then $f$ is called~$C^k$.
A map of class $C^\infty$ is also called~\emph{smooth}.
\end{Definition}

The chain rule holds in the usual form for Bastiani differentiable mappings (see, e.g., \cite[Proposition~1.23]{Sch23}), whence it makes sense to define manifolds as in the finite-dimensional setting via charts. Hence a continuous mapping $f \colon M \rightarrow N$ between (possibly infinite-dimensional, smooth) manifolds is smooth if, locally in suitable pairs of charts the map is Bastiani smooth.

%\begin{setup}%\label{conventions-mfd}
All manifolds and Lie groups considered
in the article are modeled on locally
convex spaces which may be infinite-dimensional,
unless the contrary is stated. If we say that a manifold or a~vector bundle is a Banach manifold or a Banach vector bundle we mean that all modelling spaces (or fibres in the bundle case) are Banach spaces. See~\cite{La01} for a discussion of differential geometry on Banach manifolds. All manifolds are assumed to be smooth and Hausdorff.
We~write~$\pi_M \colon TM \rightarrow M$ for the tangent bundle with bundle projection $\pi_M$.
%\end{setup}

In the Bastiani setting, the notion of submersion has to be defined as in \cite[Definition 4.4.8]{Ham82}, see also \cite{Sch23}.

\begin{Definition}\label{defn:submersion}
Let $f \colon M \rightarrow N$ be a smooth map between manifolds, we say that $f$ is a~\emph{submersion}, if for every $x \in M$ there exist manifold charts of $M$ around $x$ and of $N$ around $f(x)$ which conjugate the map $f$ to a projection onto a complemented subspace of the model space of~$M$. Pairs of charts with these properties will be called \emph{submersion charts} for the submersion~$f$.\looseness=1
\end{Definition}
We refer to \cite{glo16} for a discussion of the notion of submersion in the infinite-dimensional setting and its properties.
In particular, with this definition of a submersion fibre-products of manifolds can be defined as in the finite-dimensional setting. In particular, the usual construction for a~submersion $p\colon M \rightarrow N$ and a smooth map $g\colon L \rightarrow M$ yields (see \cite[Lemma 1.60]{Sch23}) that the fibre-product
\begin{displaymath}
 M {}_{p}\hspace{-.25em}\times_{g} L := \{(x,y) \in M\times M \mid p(x)=g(y)\}
\end{displaymath}
is a submanifold of $M\times M$. If $g=p$, we also write shorter $M\times_p M$ for this fibre-product.

\begin{Lemma}\label{lem:tangent_fibreprod}
Let $p_i \colon M_i \rightarrow N$, $i=1,2$, be surjective submersions, then the identification $T(M_1\times M_2) \cong TM_1 \times TM_2$ restricts to a diffeomorphism
\begin{displaymath}
 T((M_1) {}_{p_1}\hspace{-.5em}\times_{p_2} (M_2) ) \cong (TM_1) {}_{Tp_1}\hspace{-.5em}\times_{Tp_2} (TM_2).\end{displaymath}
\end{Lemma}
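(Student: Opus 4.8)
The plan is to use that the canonical identification $J\colon T(M_1\times M_2)\xrightarrow{\cong} TM_1\times TM_2$ is a globally defined diffeomorphism, so that it suffices to check that $J$ carries the submanifold $T(M_1\times_N M_2)$ bijectively onto the subset $(TM_1)\times_{TN}(TM_2)$; once this set-level statement is known, the restriction is automatically a diffeomorphism of submanifolds by the submanifold property (a smooth map corestricting into a splitting submanifold is smooth into it). Before this can even be phrased, one must know that the right-hand side is a submanifold. For this I would first show that $Tp_i\colon TM_i\to TN$ is again a submersion: a submersion chart conjugates $p_i$ to a linear projection $\mathrm{pr}\colon E\times F_i\to E$, where $E$ is the model space of $N$ and $F_i$ is complemented, and differentiating this chart conjugates $Tp_i$ to $\mathrm{pr}\times\mathrm{pr}\colon (E\times F_i)\times(E\times F_i)\to E\times E$, which is again a projection onto a complemented subspace. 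Hence $Tp_1$ and $Tp_2$ are surjective submersions, and $(TM_1)\times_{TN}(TM_2)$ is a submanifold of $TM_1\times TM_2$ by the fibre-product construction of \cite[Lemma~1.60]{Sch23}.

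For the set-level equality I would argue invariantly and verify the one nontrivial fact in charts. Writing the fibre product as a preimage $M_1\times_N M_2=(p_1\times p_2)^{-1}(\Delta_N)$ of the diagonal $\Delta_N\subseteq N\times N$ (a splitting submanifold, as the diagonal is a complemented subspace in any chart) under the submersion $p_1\times p_2$, functoriality of the tangent functor gives, under $J$, that $T(p_1\times p_2)$ becomes $Tp_1\times Tp_2$ and $T\Delta_N$ becomes $\Delta_{TN}$, the latter because $T$ applied to the diagonal embedding $N\to N\times N$ is the diagonal embedding $TN\to TN\times TN$. Combining these with the identity
\[
T\bigl(q^{-1}(S)\bigr)=(Tq)^{-1}(TS),
\]
valid for a submersion $q$ and a submanifold $S$, I obtain
\[
J\bigl(T(M_1\times_N M_2)\bigr)=(Tp_1\times Tp_2)^{-1}(\Delta_{TN})=(TM_1)\times_{TN}(TM_2)
\]
as subsets of $TM_1\times TM_2$. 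The displayed preimage identity is exactly the point that becomes transparent in a submersion chart for $q$: there $q$ is a projection $(u,w)\mapsto u$, so $q^{-1}(S)$ and $T(q^{-1}(S))$ acquire the $w$- and $\dot w$-directions freely, and both sides coincide on the nose.

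The remaining assembly is routine: $J$ is a diffeomorphism of the total spaces carrying one submanifold onto the other, hence its restriction is the asserted diffeomorphism. I expect the main obstacle to be the two prerequisites that make the invariant argument legitimate in the Banach (and more generally Bastiani) setting—namely that $Tp_i$ is a submersion, so that the target fibre product is genuinely a submanifold, and the preimage identity $T(q^{-1}(S))=(Tq)^{-1}(TS)$. Both reduce to the same elementary observation once submersion charts are chosen: every map in sight becomes a linear projection, and the two tangent fibre products are literally the same complemented subspace of $(E\times F_1)\times(E\times F_2)$, occurring once in the base and once in the fibre directions.
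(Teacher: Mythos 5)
Your proposal is correct, but it takes a more structured route than the paper, which simply observes that the question is local, picks submersion charts $W_i\cong U\times V_i$ in which $p_i$ becomes the projection onto $U$, writes down the embedding $\iota\colon U\times V_1\times V_2\rightarrow (U\times V_1)\times(U\times V_2)$ of the fibre product and its tangent map explicitly, and reads off that $T\iota$ identifies $T(W_1\times_N W_2)$ with $(TW_1)\,{}_{Tp_1}\!\times_{Tp_2}(TW_2)$ on the nose. You instead factor the argument through two general principles: that $Tp_i$ is again a submersion (so that the right-hand side carries a canonical splitting-submanifold structure via the fibre-product construction), and the preimage identity $T\bigl(q^{-1}(S)\bigr)=(Tq)^{-1}(TS)$ applied to $q=p_1\times p_2$ and $S=\Delta_N$, each of which you verify by the same submersion-chart computation that constitutes the paper's entire proof. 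Your modularity buys something the paper's proof glosses over: the lemma's statement presupposes a manifold structure on $(TM_1)\,{}_{Tp_1}\!\times_{Tp_2}(TM_2)$, and the paper only addresses this through the parenthetical ``as sets (hence also as manifolds)'', whereas your observation that $Tp_i$ is a submersion settles it by direct appeal to the cited fibre-product lemma. The cost is that you must justify two auxiliary facts (the preimage identity and the smoothness of corestrictions into splitting submanifolds) in the Bastiani setting, neither of which is quoted in exactly this form in the paper's references, though both follow from the same chart argument; the paper's single computation avoids invoking them altogether. Both proofs ultimately rest on the identical local picture, so neither is more general, but yours makes the logical dependencies explicit while the paper's is shorter and self-contained.
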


\begin{proof}
This question is clearly local, so we pick submersion charts on $W_i \subseteq M_i$, $i=1,2$, for~$p_i$ such that $W_i \cong U \times V_i$, with $U\subseteq E$, $V_i \subseteq F_i$ open in locally convex spaces and $p_i$ becomes the projection onto the first component with respect to this splitting. Suppressing the submersion charts, the embedding $M_1\times_N M_2 \subseteq M_1 \times M_2$ becomes
\begin{displaymath}
 \iota \colon\ U \times V_1 \times V_2 \rightarrow (U\times V_1) \times (U \times V_2), (u,v_1,v_2) \mapsto ((u,v_1),(u,v_2)).
\end{displaymath}
Passing to the tangent, we obtain the embedding $T\iota$
\begin{align*}
 U \times V_1 \times V_2 \times (E \times F_1) \times (E\times F_2) &\rightarrow ((U\times V_1)\times E\times F_1)\times ((U\times V_2)\times E\times F_2),\\
 (u,v_1,v_2,x,y_1,y_2)&\mapsto ((u,v_1,x,y_1),(u,v_2,x,y_2) ).
\end{align*}
Moreover, as $Tp_i|_{W_i} \colon (U \times V_i) \times E \times F_i \rightarrow U\times E, ((u,v_i),x,y_i) \mapsto (u,x)$, we see that as sets (hence also as manifolds) $T\iota (T((W_1){}_{p_1}\hspace{-.4em}\times_{p_2} \hspace{-.2em} (W_2)) ) ) =(TW_1){}_{Tp_1}\hspace{-.4em}\times_{Tp_2}\hspace{-.2em}(TW_2)$ as claimed.
\end{proof}

\subsection*{Acknowledgements}

The authors wish to thank P.~Steffens who brought the error in the published proof of the Stacey--Roberts lemma to their attention. We thank the referees for their diligent work leading to numerous improvements of the manuscript. Dedicated to D.M.~Roberts on the occasion of his 5th strong prime birthday.

\pdfbookmark[1]{References}{ref}
\LastPageEnding

\end{document}